\documentclass[a4paper,12pt,reqno]{amsart} 

\makeatletter
\tagsleft@false 
\makeatother

\usepackage{comment}

\usepackage{tikz}
\usepackage{pgfplots}
\pgfplotsset{compat=1.18}
\usepackage{longtable}
\usepackage{amsmath}
\usepackage{amssymb}
\usepackage{amsthm}
\usepackage{mathrsfs}
\usepackage{ifthen}
\usepackage{graphicx}
\usepackage[T1]{fontenc}

\numberwithin{equation}{section}

\newtheorem*{theorem*}{Theorem}

\newtheorem{thm}{Theorem}[section]
\newtheorem{lem}{Lemma}[section]

\newtheorem*{cor*}{Corollary}

\theoremstyle{definition}

\newtheorem{prob}{Problem}[section]

\newtheorem{cor}{Corollary}[section]

\newcounter{minutes}
\divide\time by 60
\newcounter{hours}
\multiply\time by 60
\addtocounter{minutes}{-\time}

\newcounter{own}
\def\theown{\thesection.\arabic{own}}

\newenvironment{pf}[1][]{%
	\vskip 3mm
	\noindent
	\ifthenelse{\equal{#1}{}}%
	{{\slshape Proof. }}%
	{{\slshape #1.} }%
}%
{\qed\bigskip}

\newcounter{alphabet}

\def\be{\begin{equation}}
	\def\ee{\end{equation}}

\newcommand{\bee}{\begin{enumerate}}
	\newcommand{\eee}{\end{enumerate}}

\newcommand{\blem}{\begin{lem}}
	\newcommand{\elem}{\end{lem}}
\newcommand{\bthm}{\begin{thm}}
	\newcommand{\ethm}{\end{thm}}
\newcommand{\bcor}{\begin{cor}}
	\newcommand{\ecor}{\end{cor}}
\newcommand{\eeg}{\end{examp}}

\newcommand{\eegs}{\end{examples}}
\newcommand{\edefe}{\end{defin}}
\newcommand{\bprob}{\begin{prob}}
\newcommand{\eprob}{\end{prob}}
\newcommand{\bei}{\begin{itemize}}
\newcommand{\eei}{\end{itemize}}

\allowdisplaybreaks

\begin{document}

\title{{The second and third Hankel determinants for starlike MA--Minda subclass associated to quadratic polynomials}}

\author{Vasudevarao Allu}
\address{Vasudevarao Allu,
Department of Mathematics,
School of Basic Sciences,
Indian Institute of Technology Bhubaneswar,
Bhubaneswar-752050, Odisha, India.}
\email{avrao@iitbbs.ac.in}

\author{Shobhit Kumar}
\address{Shobhit Kumar,
Department of Mathematics,
School of Basic Sciences,
Indian Institute of Technology Bhubaneswar,
Bhubaneswar-752050, Odisha, India.}
\email{a21ma09007@iitbbs.ac.in}

\subjclass[2020]{30C45, 30C50, 41A10}

\keywords{ Analytic functions, Subordination,  Hankel determinants, Starlike function, Bernstein Polynomial. }

\def\thefootnote{}
\footnotetext{
{ }
}
\makeatletter\def\thefootnote{\@arabic\c@footnote}\makeatother

\begin{abstract}
	Let  $\mathcal{A}$ denote  the class of analytic functions such that $f(0)=0$ and $f'(0)=1$ in the unit disk $\mathbb{D}:=\{z \in \mathbb{C}: |z|<1\}.$
	In this paper, we discuss the properties of a starlike subclass and compute its second and third Hankel determinants; where the class is defined as  $\mathcal{S}^*(\varphi):=\{f\in\mathcal{A}:{zf'(z)}/{f(z)}\prec \varphi(z):=1+z+{m}/{n}\,\,  z^2,\text{ such that } 2m \le n, \text{ where } m,n\in\mathbb{N}\}.$    Furthermore, we show that the bounds are sharp by determining the extremal functions for the Hankel determinants.
\end{abstract}
\maketitle
\pagestyle{myheadings}
\markboth{V. Allu and S. Kumar}{ The second and third Hankel determinants for subclass of starlike fucntions  }
\setcounter{page}{1}

\section{Introduction}
Let $\mathcal{A}$ denote the class of normalized analytic functions in the open unit disk $\mathbb{D}:=\{z\in\mathbb{C}:|z|<1\}$, satisfying $f(0)=0$ and $f'(0)=1$.  We refer to Goodman \cite{Goodman1983} for basic background on these classical families.
Every function $f\in\mathcal{A}$ admits the Taylor series expansion
\[
f(z)=z+\sum_{n=2}^{\infty}a_n z^n.
\]
The subclass of $\mathcal{A}$ consisting of all univalent functions in $\mathbb{D}$ is denoted by $\mathcal{S}$. Thus,
$
\mathcal{S}:=\{f\in\mathcal{A}: f \text{ is univalent in } \mathbb{D}\}.
$
The class $\mathcal{S}$ plays a central role in geometric function theory. Its importance stems from the fact that univalent functions map the unit disk conformally onto simply connected domains in the complex plane. Two important subclasses of $\mathcal{S}$ are the classes of starlike and convex functions, which are defined in terms of the geometric properties of their image domains.\\[2mm]
A domain \(D\subset \mathbb{C}\) is said to be starlike with respect to a point \(z_0\in D\) if, for each \(z\in D\), the entire line segment joining \(z_0\) to \(z\) is contained in \(D\). That is,
\[
(1-t)z_0+t z \in D \qquad \text{for all } z\in D \text{ and all } t\in[0,1].
\]
In particular, if \(z_0=0\), then \(D\) is said to be starlike with respect to the origin, and the above condition reduces to
\[
tz\in D \qquad \text{for all } z\in D \text{ and all } t\in[0,1].
\]
Similarly, a domain \(D\subset \mathbb{C}\) is said to be convex if, for any two points \(z_1,z_2\in D\), the entire line segment joining \(z_1\) and \(z_2\) lies in \(D\). Equivalently,
\[
(1-t)z_1+t z_2 \in D \qquad \text{for all } z_1,z_2\in D \text{ and all } t\in[0,1].
\]
A function \(f\in\mathcal{A}\) is said to be convex in \(\mathbb{D}\) if and only if
\[
\operatorname{Re}\left(1+\frac{z f''(z)}{f'(z)}\right)>0
\qquad \text{for } z\in\mathbb{D},
\]
and \(f\) is said to be starlike in \(\mathbb{D}\) if and only if
\[
\operatorname{Re}\left(\frac{z f'(z)}{f(z)}\right)>0
\qquad \text{for } z\in\mathbb{D}.
\]
In order to unify and generalize many well-known subclasses of starlike and convex functions, Ma and Minda \cite{MaMinda1992} introduced a systematic approach based on subordination. Let $\varphi$ be analytic in the unit disk $\mathbb{D}$ with $\operatorname{Re}\varphi(z)>0$, normalized by $\varphi(0)=1$ and $\varphi'(0)>0$, and further assume that $\varphi(\mathbb{D})$ is symmetric with respect to the real axis and starlike with respect to $1$. Then a function $f\in\mathcal{A}$ is said to belong to the Ma--Minda convex subclass $\mathcal{C}(\varphi)$ if, and only if,
\[
1+\frac{z f''(z)}{f'(z)}\prec \varphi(z)
\qquad \text{for}\quad z\in\mathbb{D},
\]
and to the Ma--Minda starlike subclass $\mathcal{S}^*(\varphi)$ if, and only if,
\[
\frac{z f'(z)}{f(z)}\prec \varphi(z)
\qquad \text{for}\quad z\in\mathbb{D}.
\]
The Ma--Minda framework contains many important subclasses as special cases and provides a flexible setting for the study of coefficient problems, radius problems, and extremal questions (see \cite{Janowski1971, MaMinda1992}).\\[2mm]
In this paper, we consider the function
$
\varphi(z)=1+z+{m}/{n}\, \, z^2$,
where  $m,n\in\mathbb{N},
$
and study the associated Ma--Minda starlike subclass
\[
\mathcal{S}^*(\varphi):=\left\{f\in\mathcal{A}:\frac{z f'(z)}{f(z)}\prec \varphi(z)\right\}.
\]
We first verify that, under a suitable condition on $m$ and $n$, the function $\varphi$ satisfies the standard Ma--Minda assumptions. We then derive sharp bounds for the second and third Hankel determinants for functions in $\mathcal{S}^*(\varphi)$ and identify the corresponding extremal functions.

\section{Certain properties of the  $\varphi(z)$}
First, we prove that the function $\varphi(z)$ satisfies the assumptions  of  Ma-Minda function. It is easy to see that $\varphi(0)=1$ and $\varphi'(0)=1,$ and that the function $\varphi(z)$ is symmetric with respect to the real axis.
Next, we prove that $\varphi(z)$ is univalent in $\mathbb{D}.$
Taking, 
$
a={m}/{n}>0,
$
then we may rewrite $\varphi$ as 
$
\varphi(z)=1+z+az^2.
$
Next, we determine the relation between $m$ and $n$ for which \(\varphi\) is univalent in $\mathbb{D}$, let \(z_1,z_2\in\mathbb D\). We compute
\[
\varphi(z_1)-\varphi(z_2)
=(z_1-z_2)+a(z_1^2-z_2^2)
=(z_1-z_2)\bigl(1+a(z_1+z_2)\bigr).
\]
Therefore, if \(z_1\ne z_2\), and  we have 
$
\varphi(z_1)=\varphi(z_2),
$
then, it follows that 
\[
1+a(z_1+z_2)=0,
\]
that is,
\[
z_1+z_2=-\frac1a.
\]
Thus \(\varphi\) fails to be univalent precisely when there exist distinct points
\(z_1,z_2\in\mathbb D\) such that
$
z_1+z_2=-1/a.
$
Now, for \(z_1,z_2\in\mathbb D\), we have
$
|z_1+z_2|\le |z_1|+|z_2|<2.
$
Hence, if such \(z_1,z_2\) exist, then necessarily
$
|-1/a|<2,
$
or equivalently,
$
1/a<2,
$
which implies
$
a>1/2.
$
Therefore,  if \(a\le 1/2\), then no such pair \(z_1,z_2\in\mathbb D\) can exist, and thus 
\(\varphi\) is univalent in $\mathbb{D}$ for this case.\\
Conversely, suppose \(a>1/2\). Then
$1/a<2,
$
and taking
$
c=-{1}/{2a}.
$
Then \(|c|<1\). Choose \(\varepsilon>0\) small enough so that
\[
z_1=c+\varepsilon,
\qquad
z_2=c-\varepsilon
\]
both belong to $\mathbb{D}$. Clearly \(z_1\ne z_2\), and
\[
z_1+z_2=2c=-\frac1a.
\]
Hence
\[
1+a(z_1+z_2)=1+a\left(-\frac1a\right)=0,
\]
and therefore
\[
\varphi(z_1)=\varphi(z_2).
\]
So \(\varphi\) is not univalent in $\mathbb{D}$.
We conclude that \(\varphi\) is univalent in $\mathbb{D}$ if, and only if,
$
a\le 1/2.
$
Since \(a={m}/{n}\), this is equivalent to
$
{m}/{n}\le 1/2,
$
that is,
$
2m\le n.
$ \\[2mm]
Next, we claim that the function $\varphi(z)$ is starlike with respect to the point $\varphi(0)=1.$ if, and only if, $2 m \le n.$
Setting, 
$
a={m}/{n}>0,
$
 we may write $\varphi(z)$ as
$
\varphi(z)=1+z+az^2.
$
The function \(\varphi\) is starlike with respect to \(\varphi(0)=1\) if, and only if,
\[
g(z):=\varphi(z)-1=z+az^2
\]
is starlike with respect to the origin.
Now,
\[
g'(z)=1+2az,
\]
and therefore
\begin{align*}
\frac{z g'(z)}{g(z)}
=\frac{z(1+2az)}{z(1+az)}
=\frac{1+2az}{1+az}.
\end{align*}
By the analytic definition of starlike functions, $g(z)$ is starlike in $\mathbb{D}$ if, and only if,
\[
\operatorname{Re}\left(\frac{z g'(z)}{g(z)}\right)>0, \quad
\text{ for } z\in\mathbb D,
\]
that is,
\[
\operatorname{Re}\left(\frac{1+2az}{1+az}\right)>0
\quad
\text{ for } z\in\mathbb D.
\]
First, we  suppose that
$
a\le 1/2,
$
\, let $w=az$. Then \(|w|<a\le 1/2\), and
\[
\frac{1+2az}{1+az}
=\frac{1+2w}{1+w}
=1+\frac{w}{1+w}.
\]
Also,
\[
\left|\frac{w}{1+w}\right|
\le \frac{|w|}{1-|w|}
<1.
\]
Hence
\[
\operatorname{Re}\left(1+\frac{w}{1+w}\right)>0,
\]
which gives
\[
\operatorname{Re}\left(\frac{1+2az}{1+az}\right)>0,
\qquad z\in\mathbb D.
\]
Thus $g(z)$ is starlike in $\mathbb{D}$, and so \(\varphi\) is starlike with respect to \(1\).
Conversely, suppose that
$
a>1/2.
$
Choose \(r\) such that
\[
\frac{1}{2a}<r<\min\left\{1,\frac{1}{a}\right\},
\]
and set \(z=-r\). Then \(z\in\mathbb D\), and
\[
\frac{1+2az}{1+az}
=\frac{1-2ar}{1-ar}.
\]
Since \(r>{1}/{2a}\), we have \(1-2ar<0\), and since \(r<1/a\), we have \(1-ar>0\). Therefore
\[
\frac{1-2ar}{1-ar}<0.
\]
Hence,
\[
\operatorname{Re}\left(\frac{1+2az}{1+az}\right)<0
\]
for this choice of $z$, thus $g$ is not starlike in $\mathbb{D}$. Consequently, \(\varphi\) is not starlike with respect to \(1\).
Therefore \(\varphi\) is starlike with respect to \(\varphi(0)=1\) if, and only if, 
$
a\le 1/2.
$ Therefore, the claim is true.\\[2mm]
Next, we prove that the $\operatorname{Re}\varphi(z)$ remains positive in the $\mathbb{D}.$
Again taking 
$
a={m}/{n}>0.
$
Then
$
\varphi(z)=1+z+az^2.
$
We want to determine for which \(a>0\) one has
\[
\operatorname{Re} \varphi(z)>0 \quad
\text{ for } z\in\mathbb D.
\]
Consider the harmonic function
$
u(z)=\operatorname{Re} \varphi(z).
$
Since $u$ is harmonic in $\mathbb{D}$ and continuous on \(\overline{\mathbb D}\), it is enough to study its boundary values on \(|z|=1\).\\
Let \(z=e^{it}\), where \(t\in\mathbb R\). Then
\[
u(e^{it})=\operatorname{Re}\, \bigl(1+e^{it}+ae^{2it}\bigr)
=1+\cos t+a\cos 2t.
\]
Since,
$
x=\cos t\in[-1,1],
$
we obtain
\[
u(e^{it})=1+x+a(2x^2-1)=2ax^2+x+1-a.
\]
Thus it suffices to determine when the quadratic polynomial
\[
q(x)=2ax^2+x+1-a
\]
is non-negative for all \(x\in[-1,1]\).
Now
\[
q'(x)=4ax+1.
\]
Hence the critical point is
$
x_0=-{1}/{4a}.
$
Let us discuss the following two cases.\\[2mm]
Case1:  If $0<a\le 1/4$, then
$
x_0=-{1}/{4a}\le -1.
$
Since \(q\) is convex, its minimum on \([-1,1]\) occurs at \(x=-1\). Therefore
\[
\min_{x\in[-1,1]} q(x)=q(-1)=2a-1+1-a=a>0.
\]
Hence
$
u(e^{it})>0,$
for $|z|=1$.\\[2mm]
Case2: Now suppose \(a>1/4\). Then
$
x_0=-{1}/{4a}\in(-1,0),
$
so the minimum of \(q\) on \([-1,1]\) occurs at \(x=x_0\). A direct computation gives
\[
q\!\left(-\frac{1}{4a}\right)
=2a\left(\frac{1}{16a^2}\right)-\frac{1}{4a}+1-a
=1-a-\frac{1}{8a}.
\]
Thus \(q(x)\ge 0\) on \([-1,1]\) if, and only if, 
\[
1-a-\frac{1}{8a}\ge 0.
\]
Since \(a>0\), this is equivalent to
$
8a-8a^2-1\ge 0,
$
The roots of the quadratic equation
$
8a^2-8a+1=0
$
are
$
a={(2\pm\sqrt2)}/{4}.
$
Therefore
$
8a^2-8a+1\le 0, 
$
thus, we have
\[
\frac{2-\sqrt2}{4}\le a\le \frac{2+\sqrt2}{4}.
\]
Since in the present case \(a>1/4\), only the upper bound is relevant, and we obtain
$
a\le {(2+\sqrt2)}/{4}.
$
Combining the two cases, we conclude that
\[
u(e^{it})\ge 0 \qquad \text{for} \quad |z|=1
\]
if, and only if, 
\[
0<a\le \frac{2+\sqrt2}{4}.
\]
Because $u$ is harmonic and not identically zero, the boundary condition \(u\ge 0\) implies
$
u(z)>0 
$
in $\mathbb{D}.$
Hence,
$
\operatorname{Re} \varphi(z)>0
$
if, and only if, 
$
a\le {(2+\sqrt2)}/{4}.
$
Since \(a={m}/{n}\), this is equivalent to
\[
\frac{m}{n}\le \frac{2+\sqrt2}{4}.
\]
Therefore we obtain that $\varphi(z)$ satisfies all the Ma-Minda properties given that $2m \le n,$ where $m, n\in \mathbb{N}.$ Hence, we use the convention $2 \, m \le n$ for the rest of this paper. 

\section{Sharp Hankel Determinants}
In 1966, Pommerenke \cite{Pommerenke1966} introduced the concept of the Hankel determinants for the class $\mathcal{S}.$ The expression of the $q$th Hankel determinant for a function $f \in \mathcal{A}$, whose coefficients are given as follows:
\[
H_q(n) =
\begin{vmatrix}
a_n & a_{n+1} & \cdots & a_{n+q-1} \\
a_{n+1} & a_{n+2} & \cdots & a_{n+q} \\
\vdots & \vdots & \ddots & \vdots \\
a_{n+q-1} & a_{n+q} & \cdots & a_{n+2q-2}
\end{vmatrix}
\quad \mbox{for } q, n \in \mathbb{N}.
\]
For some special choices of $n$ and $q$, the second Hankel determinant $H_{2}(2)$ is defined as 
\begin{equation}\label{eq:3.1}
H_2(2) = a_2 a_4 - a_3^{\,2},
\end{equation}
which is called the second Hankel determinant of order $2$. For our case $a_1 := 1$, therefore, the expression of the third order Hankel determinant is given by
\begin{align*}
H_3(1) =
\begin{vmatrix}
1 & a_2 & a_3 \\
a_2 & a_3 & a_4 \\
a_3 & a_4 & a_5
\end{vmatrix}
= a_3(a_2 a_4 - a_3^2) - a_4(a_4 - a_2 a_3) + a_5(a_3 - a_2^2).
\end{align*}
 Next, we introduce the Carath\'eodory class whose coefficients will help us determining the bounds Hankel determinants.
 Let $\mathcal{P}$ denote the Carath\'eodory class consisting of all analytic functions
 $p$ in the unit disk $\mathbb{D}$ satisfying
 $
 p(0)=1$
and $
 \operatorname{Re} p(z)>0$
 $z\in\mathbb{D}$.
 Thus, every function $p\in\mathcal{P}$ has the form
 \[
 p(z)=1+\sum_{n=1}^{\infty} p_n z^n.
 \]
The following lemma serves as a basic tool for establishing our main result, as it contains the well-known formulas for $p_2$, $p_3$, and $p_4$.
\begin{lem} {\label{lemma2}}\textnormal{{\cite{KwonLeckoSim2018}, \cite{LiberaZlotkiewicz1982}}}
Let $p \in \mathcal{P}$ be of the form $p(z) = 1 + \sum_{n=1}^{\infty} p_n z^n$. Then
\begin{align*}
2 p_2 &= p_1^2 + \gamma \bigl(4 - p_1^2\bigr), \\[2mm]
4 p_3 &= p_1^3 + 2\bigl(4 - p_1^2\bigr)p_1 \gamma
- \bigl(4 - p_1^2\bigr)p_1 \gamma^2
+ 2\bigl(4 - p_1^2\bigr)\bigl(1 - |\gamma|^2\bigr)\eta, \\[2mm]
8 p_4 &= p_1^4
+ \bigl(4 - p_1^2\bigr)\gamma\bigl(p_1^2(\gamma^2 - 3\gamma + 3) + 4\gamma\bigr)\\[1mm]
&\quad- 4\bigl(4 - p_1^2\bigr)\bigl(1 - |\gamma|^2\bigr)
\left(p_1(\gamma - 1)\eta + \overline{\gamma}\,\eta^2
- (1 - |\eta|^2)\rho\right)
\end{align*}
for some complex numbers $\gamma$, $\eta$, and $\rho$ such that $|\gamma| \leq 1$, $|\eta| \leq 1$, and $|\rho| \leq 1$.
\end{lem}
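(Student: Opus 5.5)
The plan is to derive all three identities from the Schur algorithm applied to the Schur function attached to $p$. As in \cite{KwonLeckoSim2018}, I read the formulas with $p_1$ real, $0\le p_1\le 2$. This is the normalization in which they are used: replacing $p(z)$ by $p(e^{i\theta}z)$ multiplies $p_n$ by $e^{in\theta}$, so one may rotate to make $p_1\ge 0$. For complex $p_1$ the factor $4-p_1^2$ would have to be replaced by $4-|p_1|^2$.

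Given $p\in\mathcal P$, I set $\omega=(p-1)/(p+1)$. This is analytic in $\mathbb D$ with $|\omega|<1$ and $\omega(0)=0$, so $\omega(z)=z\omega_1(z)$ with $|\omega_1|\le 1$ by Schwarz's lemma. I then run the Schur algorithm: put $\zeta_1=\omega_1(0)$, and when $|\zeta_1|<1$ write
\[
\omega_1(z)=\frac{\zeta_1+z\omega_2(z)}{1+\overline{\zeta_1}z\omega_2(z)},\qquad \zeta_2=\omega_2(0),
\]
and iterate once more to obtain $\zeta_3=\omega_3(0)$ and $\zeta_4=\omega_3'(0)$-type data. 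Each new Schur function is again bounded by $1$, so $|\zeta_k|\le 1$. When some $|\zeta_k|=1$ the algorithm stops, the later factors $1-|\zeta_k|^2$ vanish, and the remaining parameters can be taken arbitrary. This is why the formulas only require $|\gamma|,|\eta|,|\rho|\le 1$.

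Next I expand $p=(1+\omega)/(1-\omega)=1+2\omega+2\omega^2+2\omega^3+2\omega^4+\cdots$ up to $z^4$, using the Taylor coefficients of $\omega$ expressed through $\zeta_1,\zeta_2,\zeta_3,\zeta_4$. This gives $p_1=2\zeta_1$ and $p_2=2\zeta_1^2+2(1-|\zeta_1|^2)\zeta_2$. Since $\zeta_1=p_1/2$ is real, $4(1-|\zeta_1|^2)=4-p_1^2$, and the first identity follows with $\gamma=\zeta_2$. The same substitution for $p_3$, with $\eta=\zeta_3$, turns the contribution $-\overline{\zeta_1}\zeta_2^2(1-|\zeta_1|^2)$ into the $-(4-p_1^2)p_1\gamma^2$ term. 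It turns the $(1-|\zeta_1|^2)(1-|\zeta_2|^2)\zeta_3$ contribution into $2(4-p_1^2)(1-|\gamma|^2)\eta$.

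For $p_4$ I carry out the same expansion to fourth order, set $\rho=\zeta_4$, and collect terms. The terms without $\eta$ or $\rho$ should assemble into $\gamma\bigl(p_1^2(\gamma^2-3\gamma+3)+4\gamma\bigr)$. The terms carrying the factor $(1-|\gamma|^2)$ should give $p_1(\gamma-1)\eta+\overline{\gamma}\eta^2-(1-|\eta|^2)\rho$.

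I expect the main obstacle to be this fourth-order bookkeeping. The cross terms from $\omega^2,\omega^3,\omega^4$ and from the nested Möbius expansions must combine exactly, and one has to use $\overline{\zeta_1}=\zeta_1$ repeatedly. To keep it manageable I would first compute the Taylor coefficients $c_1,c_2,c_3$ of $\omega_1$ in terms of the $\zeta$'s (the standard Schur coefficient formulas). Then I would use $p_n=2\sum_k[z^n]\omega^k$ together with $\omega=z\omega_1$, and finally check the result against the extremal cases $\gamma=1$ and $\eta=0$ as a sanity test.
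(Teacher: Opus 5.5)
Your argument is correct, and it is more self-contained than the paper, which gives no proof of this lemma and only cites Libera--Zlotkiewicz and Kwon--Lecko--Sim. The classical derivation of the $p_2,p_3$ formulas goes through the Carath\'eodory--Toeplitz determinant conditions. That route is purely algebraic and shows each new coefficient ranging over a disk determined by the previous ones. Your Schur route handles all three coefficients, and the degenerate cases $|\zeta_k|=1$, uniformly. It also shows that, inside the paper, this lemma is a corollary of Lemma~\ref{lem:schwarz-param}: for $\omega=(p-1)/(p+1)$ with $c_1=p_1/2\ge 0$, your $\zeta_2,\zeta_3,\zeta_4$ are exactly its $\gamma,\eta,\rho$, and it only remains to insert them into $p_n=2\sum_k[z^n]\omega^k$.

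The fourth-order bookkeeping you flag does close. With $c=p_1/2$ and $s=1-c^2$ one has $p_4=2c_4+4cc_3+2c_2^2+6c^2c_2+2c^4$. After writing $2s^2\gamma^2=2s\gamma^2-2c^2s\gamma^2$, the part without the factor $1-|\gamma|^2$ is $2c^4+2s\gamma\bigl(c^2(\gamma^2-3\gamma+3)+\gamma\bigr)$. The rest is $-2s(1-|\gamma|^2)\bigl(2c(\gamma-1)\eta+\overline{\gamma}\eta^2-(1-|\eta|^2)\rho\bigr)$. Multiplying by $8$ and using $16s=4(4-p_1^2)$ and $2c=p_1$ gives the stated identity.

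Three small corrections. First, to get the factor $(1-|\eta|^2)\rho$ you must take $\rho=\omega_4(0)$. That means performing one more Schur step $\omega_3=(\zeta_3+z\omega_4)/(1+\overline{\zeta_3}z\omega_4)$; the number $\omega_3'(0)$ equals $(1-|\eta|^2)\rho$, not $\rho$.

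Second, your restriction to real $p_1\in[0,2]$ is essential, not a matter of convention: the lemma as printed, for arbitrary $p\in\mathcal P$, is false. Take $p(z)=\frac58\,\frac{1+iz}{1-iz}+\frac38\,\frac{1-iz}{1+iz}$, so that $p_1=i/2$, $p_2=-2$ and $p_3=-i/2$. The first identity forces $\gamma=-15/17$, and then the second forces $\eta=15i/8$, which is outside the closed disk.

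Third, your side remark about complex $p_1$ is incomplete. Besides replacing $4-p_1^2$ by $4-|p_1|^2$, conjugates appear; for example, the $\gamma^2$ term of $4p_3$ becomes $-(4-|p_1|^2)\overline{p_1}\gamma^2$.
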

Next, we recall the following well-known result by Choi \textit{et al.} \cite{ChoiKimSugawa2007}
\begin{lem}\label{lemma3} {\cite{ChoiKimSugawa2007}}
Let $A,B,C\in\mathbb R$ and define
\begin{equation*}
Y(A,B,C):=\max_{z\in\overline{\mathbb D}}\Big(|A+Bz+Cz^{2}|+1-|z|^{2}\Big).
\end{equation*}
\begin{itemize}
\item[(i)] If $AC\ge 0$, then
\begin{align*}
	Y(A,B,C)=
	\begin{cases}
		|A|+|B|+|C|, & \text{if } |B|\ge 2\,(1-|C|),\\[6pt]
		1+|A|+\dfrac{B^{2}}{4\,(1-|C|)}, & \text{if } |B|<2\,(1-|C|).
	\end{cases}
\end{align*}
\item[(ii)] If $AC<0$, then
\begin{align*}
	Y(A,B,C)=
	\begin{cases}
		1-|A|+\dfrac{B^{2}}{4\,(1-|C|)}, & \text{if } -4AC\,(C^{2}-1)\le B^{2}\ \text{ and }\ |B|<2\,(1-|C|),\\[8pt]
		1+|A|+\dfrac{B^{2}}{4\,(1+|C|)}, & \text{if } B^{2}<\min\!\big\{\,4(1+|C|)^{2},\ -4AC\,(C^{2}-1)\big\},\\[8pt]
		R(A,B,C), & \text{otherwise,}
	\end{cases}
\end{align*}
where,
\begin{align*}
	R(A,B,C)=
	\begin{cases}
		|A|+|B|+|C|, & \text{if } |C|\big(|B|+4|A|\big)\le |AB|,\\[6pt]
		-|A|+|B|+|C|, & \text{if } |AB|\le |C|\big(|B|-4|A|\big),\\[10pt]
		\big(|A|+|C|\big)\sqrt{\,1-\dfrac{B^{2}}{4AC}\,}, & \text{otherwise.}
	\end{cases}
\end{align*}
\end{itemize}
\end{lem}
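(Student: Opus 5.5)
Since Lemma~\ref{lemma3} is quoted from \cite{ChoiKimSugawa2007}, the paper simply invokes it. If one wants to verify it directly, I would reduce the two-variable maximization to a maximization in $r=|z|$ alone.

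\textbf{Reduction.} Write $z=re^{i\theta}$ with $0\le r\le 1$ and put $t=\cos\theta\in[-1,1]$. Since $A,B,C$ are real,
\[
|A+Bz+Cz^2|^2=F_r(t):=(A-Cr^2)^2+B^2r^2+2Br(A+Cr^2)\,t+4ACr^2\,t^2 .
\]
This uses $\cos 2\theta=2t^2-1$. So for each fixed $r$ I will maximize the quadratic $F_r$ over $t\in[-1,1]$. Then I will maximize $\sqrt{\max_t F_r(t)}+1-r^2$ over $r\in[0,1]$.

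\textbf{Case (i), $AC\ge 0$.} Here $F_r$ is convex in $t$, so its maximum is attained at $t=\pm1$. This gives
\[
\max_t \sqrt{F_r}=|A+Cr^2|+|B|r=|A|+|B|r+|C|r^2,
\]
where the last equality uses that $A$ and $C$ have the same sign. It remains to maximize
\[
g(r)=1+|A|+|B|r-(1-|C|)r^2 \quad \text{on } [0,1].
\]
If $|C|\ge1$, then $g$ is nondecreasing, so $r=1$. In that case $|B|\ge 2(1-|C|)$ holds automatically and the value is $|A|+|B|+|C|$. If $|C|<1$, then $g$ is concave with vertex at $r_0=|B|/(2(1-|C|))$. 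When $r_0\ge1$ the maximum is $g(1)=|A|+|B|+|C|$. When $r_0<1$ the maximum is $g(r_0)=1+|A|+B^2/(4(1-|C|))$. This is exactly (i).

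\textbf{Case (ii), $AC<0$.} Now $F_r$ is concave in $t$, with critical point
\[
t_0(r)=-\frac{B(A+Cr^2)}{4ACr}.
\]
If $|t_0(r)|\le 1$, I will substitute $t_0$ into $F_r$. Using $(A+Cr^2)^2=(A-Cr^2)^2+4ACr^2$ and $(A-Cr^2)^2=(|A|+|C|r^2)^2$, the maximum becomes
\[
(|A|+|C|r^2)^2\Bigl(1-\tfrac{B^2}{4AC}\Bigr).
\]
If $|t_0(r)|>1$, the maximum is at an endpoint $t=\pm1$ and equals $\bigl(|A-Cr^2|+|B|r\bigr)^2$, which is $\bigl(\bigl||A|-|C|r^2\bigr|+|B|r\bigr)^2$. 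Both expressions are then combined with $1-r^2$ and maximized in $r$.

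\textbf{Main obstacle.} The difficulty is the bookkeeping in case (ii). The set of $r\in[0,1]$ with $|t_0(r)|\le1$ is cut out by a quadratic inequality in $r$. The one-variable maximizer depends on which regime contains it:
\begin{itemize}
\item If the maximizer lies in the endpoint regime, the function of $r$ is $1-|A|+|B|r-(1-|C|)r^2$ or $1+|A|+|B|r-(1+|C|)r^2$, according to whether $|A|-|C|r^2$ is negative or positive. This produces the two closed forms $1\mp|A|+B^2/(4(1\mp|C|))$.
\item If the maximizer lies in the interior regime, the expression $(|A|+|C|r^2)\sqrt{1-B^2/(4AC)}+1-r^2$ must be maximized, and it turns out to peak at $r=1$. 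Together with the boundary cases where $r=1$ lies in the endpoint regime, this yields the three alternatives of $R(A,B,C)$.
\end{itemize}
The conditions $-4AC(C^2-1)\le B^2$ and $B^2<\min\{4(1+|C|)^2,\,-4AC(C^2-1)\}$ should arise precisely as the requirement that the vertex $r_0$ of the relevant parabola lies in $[0,1]$ and in the corresponding $t$-regime. To finish, I would compare the candidate values at all regime boundaries, checking continuity there, to confirm that the stated piecewise formula gives the global maximum in each parameter region.
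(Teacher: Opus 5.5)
Your reduction to the quadratic $F_r(t)$ and your proof of case (i) are correct and complete. Case (i) is the only part the paper uses (Theorem 3.1, where $\widetilde A\widetilde C\ge 0$), and the paper only cites the lemma without proof.

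The gap is case (ii). You assert that your bookkeeping will reproduce the printed conditions and the printed branches of $R(A,B,C)$, but it cannot, because case (ii) as printed is false. It misquotes Choi--Kim--Sugawa, whose lemma has $-4AC(C^{-2}-1)$ in place of $-4AC(C^{2}-1)$ in both conditions, and $|A|+|B|-|C|$ in place of $|A|+|B|+|C|$ in the first branch of $R$. Your own formulas show this:

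\begin{itemize}
\item Your endpoint function $1+|A|+|B|r-(1+|C|)r^2$, valid when $|A|>|C|r^2$, equals $|A|+|B|-|C|$ at $r=1$. The condition that $r=1$ lies in this endpoint regime, $|B|(|A|-|C|)\ge 4|A||C|$, is exactly $|C|(|B|+4|A|)\le |AB|$. So the first branch of $R$ should be $|A|+|B|-|C|$.
\item Requiring your vertex $r_0=|B|/(2(1-|C|))$ to satisfy $|B|\,(|C|r_0^2-|A|)\ge 4|A||C|r_0$ simplifies to $|C|B^2\ge 4|A|(1-C^2)$, i.e.\ $-4AC(C^{-2}-1)\le B^2$. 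With the printed $C^2-1$ and $|C|<1$, that condition is vacuous and the second branch can never apply.
\end{itemize}

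Two concrete counterexamples to the printed statement:
\begin{itemize}
\item For $A=1$, $B=0$, $C=-1/10$ it gives $Y=0$, although $z=0$ already gives $2$.
\item For $A=1$, $B=3$, $C=-1/10$ it gives $4.1$, whereas the true value is $3.9=|A|+|B|-|C|$.
\end{itemize}

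So the final step of your plan, confirming the stated piecewise formula, would fail. What your method can prove is the corrected lemma, and for that you still owe the comparison you only gesture at. In the regime $|t_0(r)|\le 1$ your interior expression is $1+|A|K+(|C|K-1)r^2$ with $K=\sqrt{1-B^2/(4AC)}$. This is monotone in $r$, so it does not simply peak at $r=1$. A maximizer in that regime is one of two things:
\begin{itemize}
\item $r=1$, when $1$ lies in the regime and $|C|K\ge 1$, giving $(|A|+|C|)K$;
\item a boundary point of the regime, where $|t_0(r)|=1$ and the value coincides with your endpoint expression.
\end{itemize}
The candidates $r_0$, $r_0'=|B|/(2(1+|C|))$ and $r=1$ must then be compared region by region to obtain the corrected case distinctions.

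A minor slip: the endpoint maximum is $(|A+Cr^2|+|B|r)^2$, not $(|A-Cr^2|+|B|r)^2$. Your subsequent expression $\bigl||A|-|C|r^2\bigr|$ is the correct one.
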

The next well-known lemma by Prokhrov and Szynal \cite{ProkhorovSzynal1981} will be crucial for our result.
\begin{lem}\label{lem:schwarz-param}
Let
$
w(z)=\sum_{n=1}^{\infty}c_n z^n
$
be a Schwarz function, that is, $w$ is analytic in $\mathbb D$,
$w(0)=0$, and $|w(z)|<1$ for all $z\in\mathbb D$.
If $c_1\ge 0$, then there exist complex numbers $\gamma,\eta,\rho$
with
$
|\gamma|\le 1,\, |\eta|\le 1,\, |\rho|\le 1,
$
such that
\begin{align*}
c_2 &= (1-c_1^2)\gamma,\\[2mm]
c_3 &= (1-c_1^2)\bigl(\eta(1-|\gamma|^2)-c_1\gamma^2\bigr),\\[2mm]
c_4 &= (1-c_1^2)\Bigl(
c_1^2\gamma^3
-(1-|\gamma|^2)\bigl(2c_1\gamma\eta+\overline{\gamma}\eta^2\bigr)
+(1-|\gamma|^2)(1-|\eta|^2)\rho
\Bigr).
\end{align*}
\end{lem}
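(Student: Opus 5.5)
The plan is to derive the representation from the Schur algorithm applied to $w$, tracking Taylor coefficients up to order $z^3$.

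\textbf{Step 1: the Schur parameters.} Set $\omega_1(z)=w(z)/z$. By the Schwarz lemma, $\omega_1$ is analytic in $\mathbb D$ with $|\omega_1|\le 1$ and $\omega_1(0)=c_1\in[0,1]$.
\begin{itemize}
\item If $c_1=1$, then $\omega_1\equiv 1$, so $w(z)=z$ and $c_2=c_3=c_4=0$. The formulas then hold for any choice of $\gamma,\eta,\rho$, because each carries the factor $1-c_1^2=0$.
\item If $c_1<1$, define
\[
\omega_2(z)=\frac{\omega_1(z)-c_1}{z\,(1-c_1\omega_1(z))}.
\]
By the Schwarz--Pick lemma $|\omega_2|\le 1$, and we put $\gamma:=\omega_2(0)$.
\end{itemize}
Repeat the construction one level down. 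If $|\gamma|<1$, define
\[
\omega_3(z)=\frac{\omega_2(z)-\gamma}{z\,(1-\overline{\gamma}\,\omega_2(z))}
\]
and put $\eta:=\omega_3(0)$. If $|\gamma|<1$ and $|\eta|<1$, define $\omega_4$ from $\omega_3$ and $\eta$ in the same way and put $\rho:=\omega_4(0)$.

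Whenever $|\gamma|=1$ or $|\eta|=1$, the corresponding $\omega_k$ is a unimodular constant. In that case we set the later parameters arbitrarily, say $\eta=0$ or $\rho=0$. This is harmless because they appear in the claimed formulas only multiplied by $1-|\gamma|^2$ or $1-|\eta|^2$.

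\textbf{Step 2: invert and expand.} Since $c_1$ is real, inverting the first step gives
\[
\omega_1=\frac{c_1+z\omega_2}{1+c_1 z\omega_2}=c_1+(1-c_1^2)\bigl(z\omega_2-c_1z^2\omega_2^2+c_1^2z^3\omega_2^3-\cdots\bigr).
\]
In the same way,
\[
\omega_2=\gamma+(1-|\gamma|^2)\bigl(z\omega_3-\overline{\gamma}z^2\omega_3^2+\cdots\bigr),
\qquad
\omega_3=\eta+(1-|\eta|^2)z\omega_4+O(z^2).
\]
Substituting these into one another and using $c_{k+1}=[z^k]\,\omega_1$ yields the coefficients.
\begin{itemize}
\item At order $z^1$: $c_2=(1-c_1^2)\gamma$.
\item At order $z^2$: the coefficient of $z$ in $\omega_2$ is $(1-|\gamma|^2)\eta$, so $c_3=(1-c_1^2)\bigl((1-|\gamma|^2)\eta-c_1\gamma^2\bigr)$.
\end{itemize}

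\textbf{Step 3: the coefficient $c_4$.} This is the only step needing care, and it is pure bookkeeping. We need the $z^2$-coefficient of $\omega_2$, namely $(1-|\gamma|^2)\bigl((1-|\eta|^2)\rho-\overline{\gamma}\eta^2\bigr)$, and the $z^1$-coefficient of $\omega_2^2$, namely $2\gamma(1-|\gamma|^2)\eta$. Collecting the $z^3$ terms of the expansion in Step 2 gives
\[
c_4=(1-c_1^2)\Bigl[(1-|\gamma|^2)\bigl((1-|\eta|^2)\rho-\overline{\gamma}\eta^2\bigr)-2c_1\gamma(1-|\gamma|^2)\eta+c_1^2\gamma^3\Bigr].
\]
This is exactly the stated formula. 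Finally, $|\gamma|,|\eta|,|\rho|\le1$ holds by construction, since each is the value at $0$ of a self-map of $\overline{\mathbb D}$.
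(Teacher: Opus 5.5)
Your proof is correct. The Schur-algorithm recursion, the treatment of the degenerate cases $c_1=1$, $|\gamma|=1$, $|\eta|=1$ (where the free parameters are annihilated by the factors $1-c_1^2$, $1-|\gamma|^2$, $1-|\eta|^2$), and the bookkeeping $[z^2]\omega_2=(1-|\gamma|^2)\bigl((1-|\eta|^2)\rho-\overline{\gamma}\eta^2\bigr)$ and $[z^1]\omega_2^2=2\gamma(1-|\gamma|^2)\eta$ all check out and give exactly the stated $c_2,c_3,c_4$. The only point left implicit is what Schwarz--Pick needs: $|\omega_1|<1$ in $\mathbb D$ when $c_1<1$, and $|\omega_2|<1$ when $|\gamma|<1$, both by the maximum principle.

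The paper gives no proof of its own and simply quotes the lemma from Prokhorov--Szynal. Your argument is the standard Schur/Carlson parametrization behind that citation, so it is the same route, written out in full.
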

First, we establish a bound for the second Hankel determinant and show that the bound is sharp.
\begin{thm}
Let $f \in \mathcal{S}^*(\varphi),$ then $|H_2(2)| \leq {1}/{4}.$ The result is sharp.
\end{thm}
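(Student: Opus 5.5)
Write $zf'(z)/f(z)=\varphi(w(z))$ for a Schwarz function $w(z)=\sum_{k\ge1}c_kz^k$, and put $a=m/n\in(0,1/2]$. Expanding $\varphi(w)=1+w+aw^2$ gives
\[
\frac{zf'(z)}{f(z)}=1+c_1z+(c_2+ac_1^2)z^2+(c_3+2ac_1c_2)z^3+\cdots .
\]
Comparing coefficients in $zf'=f\cdot(zf'/f)$ yields
\[
a_2=c_1,\qquad a_3=\tfrac12\bigl(c_2+(1+a)c_1^2\bigr),\qquad a_4=\tfrac13\Bigl(c_3+\bigl(2a+\tfrac32\bigr)c_1c_2+\tfrac{3a+1}{2}c_1^3\Bigr).
\]
Hence $H_2(2)=a_2a_4-a_3^2$ is a polynomial in $c_1,c_2,c_3$ whose coefficients depend on $a$. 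Its leading pieces are $\tfrac13c_1c_3-\tfrac14c_2^2$, plus mixed terms $c_1^2c_2$ and $c_1^4$.

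Since $\mathcal S^*(\varphi)$ is rotation invariant and $|H_2(2)|$ is unchanged under $f(z)\mapsto e^{-i\theta}f(e^{i\theta}z)$, we may assume $c_1=c\in[0,1]$. Then we substitute the Prokhorov--Szynal parametrization of Lemma~\ref{lem:schwarz-param} for $c_2$ and $c_3$. The $\eta$-term enters only through $\tfrac13 c(1-c^2)(1-|\gamma|^2)\eta$, so the triangle inequality and $|\eta|\le1$ give
\[
|H_2(2)|\le \tfrac13 c(1-c^2)(1-|\gamma|^2)+\bigl|A(c)+B(c)\gamma+C(c)\gamma^2\bigr|,
\]
with explicit real polynomials $A,B,C$ in $c$ and $a$.

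For $0<c<1$ we factor out $\tfrac13c(1-c^2)$, which turns the bound into
\[
|H_2(2)|\le \tfrac13 c(1-c^2)\Bigl(\bigl|\tilde A+\tilde B\gamma+\tilde C\gamma^2\bigr|+1-|\gamma|^2\Bigr).
\]
We then apply Lemma~\ref{lemma3} to evaluate $Y(\tilde A,\tilde B,\tilde C)$, checking the sign of $\tilde A\tilde C$ and which case applies for $c\in(0,1)$ and $a\le1/2$. As a cruder alternative, we can simply maximize over $|\gamma|=t\in[0,1]$ after the triangle inequality. Either way we obtain a function $F(c,t)$, or $G(c)$ after optimizing in $t$.

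The two endpoints are handled directly:
\begin{itemize}
\item At $c=0$ we get $H_2(2)=-\tfrac14c_2^2$, so $|H_2(2)|\le\tfrac14$.
\item At $c=1$ we have $w(z)=z$, and $|H_2(2)|$ is an explicit polynomial in $a$ that we check is below $\tfrac14$ for $a\le\tfrac12$.
\end{itemize}

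The main obstacle is the interior estimate: showing that $G(c)\le\tfrac14$ for all $c\in[0,1]$, uniformly in $a\in(0,1/2]$. The small-$c$ regime is most delicate, because the $\gamma^2$ coefficient tends to the $-\tfrac14$ term while the other contributions vanish. I would first verify that $\partial G/\partial c\le0$ near $0$, or that $G(c)-\tfrac14$ factors with a nonpositive polynomial factor. Monotonicity in $a$ should let us reduce to the worst case $a=\tfrac12$ and check a single one-variable polynomial inequality.

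For sharpness, take $w(z)=z^2$, i.e.\ $c_1=c_3=0$ and $c_2=1$. The extremal function is
\[
f(z)=z\exp\!\Bigl(\int_0^z\frac{\varphi(t^2)-1}{t}\,dt\Bigr)=z\exp\!\Bigl(\tfrac{z^2}{2}+\tfrac{a z^4}{4}\Bigr),
\]
which lies in $\mathcal S^*(\varphi)$ and has $a_2=a_4=0$, $a_3=\tfrac12$. Hence $|H_2(2)|=\tfrac14$.
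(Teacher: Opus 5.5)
Your argument follows the paper's. It uses the same identity $12H_2(2)=-(1+3a^2)c_1^4+2ac_1^2c_2-3c_2^2+4c_1c_3$ and the same normalization $c_1=c\ge 0$ with the Prokhorov--Szynal substitution. It reduces via the triangle inequality (equivalently Lemma~\ref{lemma3}), checks the same endpoints, and uses the same extremal function coming from $w(z)=z^2$.

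What is missing is the step you call the main obstacle, which you only sketch as a plan ($\partial G/\partial c\le 0$, monotonicity in $a$). It is in fact immediate, and nothing is delicate near $c=0$. With $r=|\gamma|$, your bound reads
\[
12|H_2(2)|\le (1+3a^2)c^4+4c(1-c^2)+2ac^2(1-c^2)\,r+(1-c^2)(1-c)(3-c)\,r^2 .
\]
Here the $\gamma^2$-coefficient has modulus $(1-c^2)(3+c^2)$, the $\eta$-term contributes $4c(1-c^2)(1-r^2)$, and $3+c^2-4c=(1-c)(3-c)\ge 0$. All coefficients in $r$ are nonnegative, so for every $c$ the maximum is at $r=1$. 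This gives
\[
12|H_2(2)|\le 3-2(1-a)c^2-a(2-3a)c^4\le 3 \qquad (0\le c\le 1,\ 0<a\le \tfrac12),
\]
with equality only at $c=0$.

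This is exactly the paper's $|A|+|B|+|C|$ computation. There, $|\tilde C|=(3+c^2)/(4c)>1$ for $0<c<1$ places one in the first subcase of Lemma~\ref{lemma3}(i). The inequality also covers $c=0$ and $c=1$ directly, so neither the separate endpoint checks nor a reduction to $a=\tfrac12$ is needed.
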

\begin{pf}
Let $f \in \mathcal{S}^*(\varphi)$ be given  by 
\[
z\frac{f'(z)}{f(z)} \prec \varphi(z), \quad \text{for} \quad z\in \mathbb{D},
\]
 then there exists a Schwarz function $w: \mathbb{D}\rightarrow\mathbb{D}$  such that $w$ is analytic and satisfies $w(0)=0$ and has the series representation as
$
w(z)=\sum_{k=1}^{\infty} c_k z^k
$
such that $t=m/n,$ and 
\[
\frac{z f'(z)}{f(z)}=\varphi(w(z))=1+w(z)+t\,w(z)^2,\quad \text{for}\quad z\in\mathbb{D}.
\]
By replacing $f$ with it's rotations
\[
f_{\theta}(z):=e^{-i\theta}f(e^{i\theta}z),
\]
 we may assume without loss of generality that $c_1\ge 0$. 
Assume that
\[
w(z) = \frac{p(z) - 1}{p(z) + 1},\quad \mbox{and}  \quad p(z) = 1 + \sum_{n=1}^{\infty}p_{n}z^{n} \in \mathcal{P}.
\]
After substituting the expressions for $w(z)$, $p(z)$, and $f(z)$, and comparing coefficients of both the sides, we obtain the following relation between the coefficients of $f$ and the coefficients of Schwarz function $w(z),$ 
\begin{align}
\left.
\begin{aligned}
	a_2 &= c_1, \\[4pt]
	a_3 &= \frac{1}{2}\bigl((1+t)c_1^2+c_2\bigr), \\[4pt]
	a_4 &= \frac{1}{6}\bigl((1+3t)c_1^3+(3+4t)c_1c_2+2c_3\bigr), \\[4pt]
	a_5 &= \frac{1}{24}\bigl((1+6t+3t^2)c_1^4+2(3+11t)c_1^2c_2+(3+6t)c_2^2 \\[2pt]
	&\qquad  +4(2+3t)c_1c_3+6c_4\bigr).
\end{aligned}
\right\}
& \label{eq:3.2}
\end{align}
Now putting the values of \eqref{eq:3.2} in the \eqref{eq:3.1}, we obtain
\begin{align}\label{eq:3.3}
H_{2}(2)=	\frac{1}{12}\left(
-\left(1+3t^2\right)c_1^4+2tc_1^2c_2-3c_2^2+4c_1c_3
\right)
\end{align}
Now using  lemma \ref{lem:schwarz-param} in equation \eqref{eq:3.3}, we obtain
\begin{align}\label{eq:3.4}
12\,  H_{2}(2)&=	-\left(1+3t^2\right)c_1^4
+2t\gamma c_1^2\left(1-c_1^2\right)
+4c_1\left(\eta\left(1-|\gamma|^2\right)-\gamma^2c_1\right)\left(1-c_1^2\right)\notag\\
&\quad-3\gamma^2\left(1-c_1^2\right)^2.
\end{align}
Now we write the above expression in the form 
\begin{align}\label{eq:3.5}
A+ B \gamma+ C \gamma^2+ D (1-|\gamma|^2),
\end{align}
where
\begin{align*}
A &= -\left(1+3t^2\right)c_1^4, \qquad
B = -2tc_1^2\left(-1+c_1^2\right), \\
C &= -3+2c_1^2+c_1^4, \qquad
D = -4\eta c_1\left(-1+c_1^2\right).
\end{align*}
For \(0<c_1<1\), we have
\[
|D|=4\, c_1\, |\eta|(1-c_1^2),
\]
and therefore, by the triangle inequality and taking $|\eta|=1$,
\[
|12\,H_2(2)|
\le
|D|
\left(
\left|
\widetilde{A}+\widetilde{B}\gamma+\widetilde{C}\gamma^2
\right|
+1-|\gamma|^2
\right),
\]
where
\begin{align}\label{eq:3.6}
\widetilde{A} &= \frac{A}{|D|}=-\frac{(1+3t^2)c_1^4}{-4c_1(-1+c_1^2)}, \qquad
\widetilde{B} = \frac{B}{|D|}=\frac{1}{2}tc_1, \qquad
\widetilde{C} = \frac{C}{|D|}=-\frac{3+c_1^2}{4c_1}.
\end{align}
Hence
\[
|12\,H_2(2)|\le |D|\,Y(\widetilde{A},\widetilde{B},\widetilde{C}).
\]
In order to apply Lemma \ref{lemma3}, we verify the condition
\[
\widetilde{A}\, \widetilde{C} \ge 0.
\]
We may write the multiplication as 
\begin{align*}
\widetilde{A}\widetilde{C}
=
-\frac{(1+3t^2)c_1^2(3+c_1^2)}{16(-1+c_1^2)}.
\end{align*}
It is easy to see that  $
\widetilde{A}\, \widetilde{C} \ge 0$ is true for $(c_{1}, t)\in [0, 1) \times[0, 1/2].$ We solve the case $c_{1}=1$ separately. Putting $c_{1}=1$ in \eqref{eq:3.4} we obtain
\[
12\, H_{2}(2)|_{c_{1}=1}=-1 - 3 t^2,
\]
thus, at $c_{1}=1,$ we obtain $|H_{2}(2)| \le 7/48.$\\[2mm]
Now for the remaining cases $c_{1}\in [0, 1).$ We need to compute when  $
|\widetilde{B}| \ge 2(1-|\widetilde{C}|)
$ holds. 
Since \(\widetilde{C}\) contains \(c_1\) in the denominator, the expression is defined only for
$
0<c_1<1.
$
Now, for \(0<c_1<1\) and \(0\le t\le 1/2\), we get
\begin{align*}
|\widetilde{B}|=\left|\frac12 tc_1\right|=\frac12 tc_1,
\end{align*}
because \(t\ge 0\) and \(c_1>0\). Also,
\begin{align*}
|\widetilde{C}|
=\left|-\frac{3+c_1^2}{4c_1}\right|
=\frac{3+c_1^2}{4c_1},
\end{align*}
since \(3+c_1^2>0\) and \(4c_1>0\).
Therefore, the inequality
$
|\widetilde{B}|\ge 2\bigl(1-|\widetilde{C}|\bigr)
$
becomes
\begin{align*}
\frac12 tc_1
&\ge 2\left(1-\frac{3+c_1^2}{4c_1}\right) \\
&= 2\left(\frac{4c_1-(3+c_1^2)}{4c_1}\right) \\
&= \frac{4c_1-3-c_1^2}{2c_1}.
\end{align*}
Multiplying by \(2c_1>0\), we obtain the equivalent inequality
$
tc_1^2 \ge 4c_1-3-c_1^2.
$
Now,
\begin{align*}
4c_1-3-c_1^2
&=-(c_1^2-4c_1+3) \\
&=-(c_1-1)(c_1-3).
\end{align*}
For \(0<c_1<1\), we have
$	(c_1-1)(c_1-3)>0$,
and thus, we have 
\begin{align*}
4c_1-3-c_1^2<0.
\end{align*}
On the other hand, since \(t\ge 0\) and \(c_1^2>0\), we have
$
tc_1^2\ge 0.
$
Therefore, 
\begin{align*}
tc_1^2\ge 0 > 4c_1-3-c_1^2,
\end{align*}
so the inequality
\begin{align*}
tc_1^2 \ge 4c_1-3-c_1^2
\end{align*}
is automatically satisfied for every
$
0<c_1<1.
$
We evaluate  the case $c_{1}=0$ separately by putting $c_{1}=0$ in \eqref{eq:3.4}. Thus, we have 
\[
12\, |H_{2}(2)|_{c_{1}=0}=|-3 \gamma^2| \le 3.
\]
Hence, at $c_{1}=0,$ we have 
\[
|H_{2}(2)|\le \frac{3}{12}=\frac{1}{4}.
\]
For the remaining case precisely $c_{1}\in (0, 1),$ Lemma \ref{lemma3} applies, and since
\[
\widetilde{A}\widetilde{C}\ge 0
\qquad\text{and}\qquad
|\widetilde{B}|\ge 2(1-|\widetilde{C}|),
\]
we obtain
\[
Y(\widetilde{A},\widetilde{B},\widetilde{C})
=
|\widetilde{A}|+|\widetilde{B}|+|\widetilde{C}|.
\]
Consequently,
\[
|12\,H_2(2)|
\le
|D|\Bigl(|\widetilde{A}|+|\widetilde{B}|+|\widetilde{C}|\Bigr)
=
|A|+|B|+|C|.
\]
It is easy to see that
\[
|A|+|B|+|C|=(1+3t^2)c_1^4+2tc_1^2(1-c_1^2)+3-2c_1^2-c_1^4.
\]
Let
\begin{align*}
\widetilde{F}(c_1,t)
&=(1+3t^2)c_1^4+2t\,c_1^2(1-c_1^2)+3-2c_1^2-c_1^4,
\end{align*}
where \(0<c_1<1\) and \(0\le t\le 1/2\).
We first simplify the expression:
\begin{align*}
\widetilde{F}(c_1,t)
&=(1+3t^2)c_1^4+2t c_1^2-2t c_1^4+3-2c_1^2-c_1^4 \\
&=(3t^2-2t)c_1^4+(2t-2)c_1^2+3 \\
&=3-2(1-t)c_1^2-t(2-3t)c_1^4.
\end{align*}
Since \(0\le t\le 1/2\), we have
$
1-t\ge 1/2>0$
and $
2-3t\ge 1/2>0.
$
Therefore,  \(2(1-t)c_1^2>0\) because \(c_1>0\), and also \(t(2-3t)c_1^4\ge 0\). Hence, we have 
\begin{align*}
\widetilde{F}(c_1,t)=3-2(1-t)c_1^2-t(2-3t)c_1^4<3
\end{align*}
for every \(0<c_1<1\) and \(0\le t\le 1/2\).
Thus the function does not attain a maximum on the given region.
Exhausting all the cases we obtain 
\[
|H_{2}(2)| \le \frac{1}{4}
\] 
over the region $(c_{1}, t)\in [0, 1] \times[0, 1/2].$\\[2mm]
Now we prove that the extremal function for the second Hankel determinant is obtained by  taking $w(z)=z^2.$ Therefore, we have 
\[
\varphi(z^2)=1+z^2+\frac{m}{n}z^4.
\]
Hence
\[
\frac{z f'(z)}{f(z)}=\varphi(z^2)=1+z^2+\frac{m}{n}z^4,
\]
and therefore
\[
\frac{f'(z)}{f(z)}=\frac1z+z+\frac{m}{n}z^3.
\]
Integrating, we obtain
\[
\log f(z)=\log z+\frac{z^2}{2}+\frac{m}{4n}z^4+C,
\]
and hence we have
\[
f(z)=C\,z\exp\left(\frac{z^2}{2}+\frac{m}{4n}z^4\right).
\]
Since \(f(0)=0\) and \(f'(0)=1\), we  have \(C=1\). Therefore,
\[
f(z)=z\exp\left(\frac{z^2}{2}+\frac{m}{4n}z^4\right).
\]
Using the exponential series,
\[
e^{\frac{z^2}{2}+\alpha z^4}
=1+\left(\frac{z^2}{2}+\alpha z^4\right)
+\frac12\left(\frac{z^2}{2}+\alpha z^4\right)^2
+\frac16\left(\frac{z^2}{2}\right)^3+\cdots,
\]
we obtain
\[
e^{\frac{z^2}{2}+\alpha z^4}
=1+\frac{z^2}{2}+\left(\alpha+\frac18\right)z^4
+\left(\frac{\alpha}{2}+\frac{1}{48}\right)z^6+\cdots.
\]
Therefore,
\[
f(z)=z+\frac12 z^3+\left(\alpha+\frac18\right)z^5
+\left(\frac{\alpha}{2}+\frac{1}{48}\right)z^7+\cdots.
\]
Substituting \(\alpha={m}/{4n}\), we get
\[
f(z)=z+\frac12 z^3+\frac{n+2m}{8n}z^5+\frac{n+6m}{48n}z^7+\cdots.
\]
Thus, in the expansion
\[
f(z)=z+a_2z^2+a_3z^3+a_4z^4+a_5z^5+\cdots,
\]
we have
\[
a_2=0,\qquad a_3=\frac12,\qquad a_4=0,\qquad a_5=\frac{n+2m}{8n}.
\]
Hence
\[
H_2(2)=0\cdot 0-\left(\frac12\right)^2=-\frac14,
\]
therefore,
\[
|H_2(2)|=\frac14.
\]
which proves the sharpness of the result. This completes the proof.
\end{pf}\\
Now, we establish a sharp bound and extremal function for the third Hankel determinant.
\begin{thm}
Let $f\in \mathcal{S}^*(\varphi),$ then $|H_{3}(1)|\le 1/9,$ the result is sharp.
\end{thm}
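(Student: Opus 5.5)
The plan is to follow the scheme of the proof of the preceding theorem. We first reduce $H_3(1)$ to an expression in the Schwarz coefficients, then bound it case by case in $c_1$, and finally exhibit an extremal function. Throughout, $t=m/n\in(0,1/2]$ and we keep the normalization $c_1\ge 0$, obtained by rotation as before.

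\textbf{Reduction to a real maximization.} Insert the formulas \eqref{eq:3.2} for $a_2,\dots,a_5$ into
\[
H_3(1)=a_3(a_2a_4-a_3^2)-a_4(a_4-a_2a_3)+a_5(a_3-a_2^2).
\]
This gives $H_3(1)$ as a polynomial in $c_1,c_2,c_3,c_4$ with coefficients depending on $t$, over the common denominator $144$ or $576$. Next, substitute the Prokhorov--Szynal parametrization of Lemma \ref{lem:schwarz-param}. Then $H_3(1)$ becomes a polynomial in $c_1\in[0,1]$ and $\gamma,\eta,\rho$ in the closed unit disk. Apply the triangle inequality with $x=|\gamma|$ and $y=|\eta|$, and use $|\rho|\le 1$; the term containing $\rho$ is linear in $\rho$ with factor $(1-x^2)(1-y^2)$. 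This yields
\[
|H_3(1)|\le G(c_1,x,y,t),\qquad (c_1,x,y,t)\in[0,1]^3\times(0,1/2].
\]
It then remains to show that $G\le 1/9$ on this box. Where it helps, one can first maximize in $\eta$ for fixed $\gamma$ using Lemma \ref{lemma3}, since the dependence on $\eta$ is quadratic of the form $|A+B\eta+C\eta^2|+1-|\eta|^2$ after factoring.

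\textbf{The two endpoint cases in $c_1$.}
\begin{itemize}
\item For $c_1=1$ we have $c_2=c_3=c_4=0$, so $H_3(1)$ is an explicit polynomial in $t$. A direct check on $[0,1/2]$ should show its modulus is well below $1/9$.
\item For $c_1=0$ we get $a_2=0$, $a_3=\gamma/2$ and $a_4=c_3/3$, where
\[
c_3=\eta(1-|\gamma|^2),\qquad c_4=-\bar\gamma\eta^2+(1-|\gamma|^2)(1-|\eta|^2)\rho .
\]
The determinant becomes
\[
H_3(1)=-\frac{\gamma^3}{8}-\frac{c_3^2}{9}+\frac{\gamma\bigl((3+6t)\gamma^2+6c_4\bigr)}{48}.
\]
Bounding this in $x=|\gamma|$ and $y=|\eta|$ gives an elementary two-variable problem. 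At $x=0$ it reduces to $y^2/9\le 1/9$, which is where the value $1/9$ should come from. For $x>0$ one checks that the terms of order $x^3$ and $x$, which are at most about $x/8$ in total, are dominated by the loss $\tfrac19\bigl(1-(1-x^2)^2y^2\bigr)$.
\end{itemize}

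\textbf{Main obstacle: the interior $0<c_1<1$.} Here $G$ is a genuinely four-variable polynomial, and I expect this to be the hardest step. I would try the following, in order.
\begin{enumerate}
\item Show that $G$ is nondecreasing in $t$. All $t$-dependence enters through nonnegative coefficients after the triangle inequality, so it suffices to treat $t=1/2$.
\item Split according to whether $(1-x^2)(1-y^2)$, the coefficient of $\rho$, dominates. Compare $\partial G/\partial y$ to show the maximum in $y$ is attained at $y=0$ or $y=1$.
\item On each resulting face, reduce to a polynomial in $(c_1,x)$ and verify that it is $<1/9$. This last verification uses critical-point analysis, and numerical inspection if needed.
\end{enumerate}
If the crude triangle inequality overshoots $1/9$ near $c_1$ small, I would instead keep $\eta$ complex and apply Lemma \ref{lemma3} to the quadratic in $\eta$, as in the second Hankel case.

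\textbf{Sharpness.} Take $w(z)=z^3$. Then
\[
\frac{zf'(z)}{f(z)}=1+z^3+\frac{m}{n}z^6,\qquad f(z)=z\exp\Bigl(\frac{z^3}{3}+\frac{m}{6n}z^6\Bigr)=z+\frac13z^4+\cdots .
\]
Hence $a_2=a_3=a_5=0$ and $a_4=1/3$, so $H_3(1)=-a_4^2=-1/9$, and $|H_3(1)|=1/9$ is attained.
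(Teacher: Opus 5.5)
Your reduction (the coefficients \eqref{eq:3.2}, Lemma \ref{lem:schwarz-param}, the triangle inequality in $x=|\gamma|$, $y=|\eta|$, $|\rho|\le 1$) matches the paper, as do the check at $c_1=1$ and the extremal function from $w(z)=z^3$. Your expression for $H_3(1)$ at $c_1=0$ in terms of $\gamma,c_3,c_4$ is also correct. The genuine gap is that the actual content of the theorem is never proved. That content is the polynomial inequality $144\,G\le 16$ on $[0,1]^3\times(0,1/2]$. For $0<c_1<1$ you only list strategies ending in ``critical-point analysis, and numerical inspection if needed''.

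Numerical inspection cannot certify this inequality, because it is tight at a boundary corner. On the face $y=1$, the bound for $144|H_3(1)|$ at $c_1=0$ is $16-14x^2+9(1-2t)x^3-2x^4$. This equals $16$ at $x=0$ for every $t$, so the margin tends to $0$ as $(c_1,x)\to(0,0)$.

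The paper closes this with a certified argument in two steps:
\begin{itemize}
\item Bernstein expansions on a subdivided cube give $R_2\le 7939/576<16$ everywhere. They also give $R_1<16$ off $[0,1/8]^2\times[0,1]$; on that box even the refined Bernstein bound is $1025/64>16$.
\item On that box, explicit coefficient estimates give $16-R_1\ge 30p_1^2-27p_1x+\tfrac{93}{8}x^2\ge 0$.
\end{itemize}
The crude triangle inequality does \emph{not} overshoot near $c_1=0$. To second order, $16-R_1\approx 32p_1^2-4(6-5t)p_1x+14x^2$, which is positive definite. So switching to Lemma \ref{lemma3} is not what is missing. What is missing is a rigorous local estimate at that corner plus a certified bound elsewhere.

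Three of your intermediate steps are also false as stated.

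\textbf{Monotonicity in $t$.} $G$ is not nondecreasing in $t$. The coefficient of $c_2^3$ in $144H_3(1)$ is $9(2t-1)\le 0$. The resulting $x^3$-coefficient of the bound is $(1-c_1^2)\bigl(3(3+2c_1^2+c_1^4)-2t(9-4c_1^2+4c_1^4)\bigr)$, which decreases in $t$. At $c_1=0$, $x=1$ the bound is $(1-2t)/16$, largest as $t\to 0$. So you cannot restrict to $t=1/2$; the paper keeps $u=2t\in[0,1]$ as a variable.

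\textbf{Endpoints in $y$.} The maximum in $y$ need not occur at $y\in\{0,1\}$. In the bound \eqref{eq:3.12} the $y^2$-coefficient is $2(1-x^2)(1-c_1^2)(1-x)\bigl((8-x)(1-c_1^2)-9(1-t)c_1^2\bigr)$. This is negative for $c_1$ near $1$, while the $y$-coefficient is positive, so the vertex can be interior. For $x=t=0$ it tends to $y=2/9$ as $c_1\to 1$. The paper avoids this by first replacing the linear term $y\cdot(\text{nonnegative})$ by its value at $y=1$. That leaves a bound affine in $y^2$.

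\textbf{The formula for $c_4$ at $c_1=0$.} Lemma \ref{lem:schwarz-param} gives $c_4=(1-|\gamma|^2)\bigl((1-|\eta|^2)\rho-\bar\gamma\eta^2\bigr)$. You dropped the factor $1-|\gamma|^2$ on $\bar\gamma\eta^2$. With your formula the claimed domination fails: at $x=y=1$ your terms total $(3-2t)/16>1/9$. With the correct $c_4$, the $c_1=0$ case reduces, after taking $y=1$, to $x^2\bigl(-2x^2+9(1-2t)x-14\bigr)\le 0$, which holds.
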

\begin{pf}
Let $f\in \mathcal{S}^*(\varphi)$ be given by 
\[
z\frac{f'(z)}{f(z)} \prec \varphi(z), \quad \text{for} \quad z\in \mathbb{D}.
\]
 Thus,  there exists a Schwarz function $w(z)$ in the form of a power series 
$
w(z)=\sum_{n=1}^{\infty} c_n z^n
$
such that 
\begin{align}\label{eq:3.7}
z\frac{f'(z)}{f(z)} = \varphi(w(z)), \quad z\in \mathbb{D}.
\end{align}
As in the proof of  Theorem 3.1, by replacing $f$ with it's rotations
\[
f_{\theta}(z):=e^{-i\theta}f(e^{i\theta}z),
\]
we do not change the value of $|H_3(1)|$, and the corresponding Schwarz function becomes
\[
w_{\theta}(z)=w(e^{i\theta}z)=\sum_{n=1}^{\infty} c_n e^{in\theta} z^n.
\]
Choosing $\theta=-\arg c_1$, we may assume without loss of generality that $c_1\ge 0$.
The third Hankel determinant is defined by
\begin{align}\label{eq:3.8}
H_3(1) 
= a_3(a_2 a_4 - a_3^2) - a_4(a_4 - a_2 a_3) + a_5(a_3 - a_2^2).
\end{align}
Putting the values of \eqref{eq:3.2} in \eqref{eq:3.8}, we obtain
\begin{align}\label{eq:3.9}
144\, H_{3}(1)&=-\left(1-3t+9t^2+9t^3\right)c_1^6
+\left(3-2t+21t^2\right)c_1^4c_2
+9\left(-1+2t\right)c_2^3 \notag \\
&\quad
+4\left(2-3t+9t^2\right)c_1^3c_3
-4\left(-6+7t\right)c_1c_2c_3
-16c_3^2
+18c_2c_4 \notag\\
&\quad
+c_1^2\Bigl(\left(-9+3t-46t^2\right)c_2^2
+18\left(-1+t\right)c_4\Bigr).
\end{align}
Now using the parametric relation in Lemma \ref{lem:schwarz-param} in \eqref{eq:3.9}, we obtain
\begin{align}\label{eq:3.10}
144\, H_{3}(1)&=-\left(1-3t+9t^2+9t^3\right)c_1^6
+\left(3-2t+21t^2\right)\gamma c_1^4(1-c_1^2) \notag\\
&\quad
+4\left(2-3t+9t^2\right)c_1^3
\left(\eta(1-|\gamma|^2)-\gamma^2c_1\right)(1-c_1^2) \notag\\
&\quad
-4\left(-6+7t\right)\gamma c_1
\left(\eta(1-|\gamma|^2)-\gamma^2c_1\right)(1-c_1^2)^2 \notag\\
&\quad
-16\left(\eta(1-|\gamma|^2)-\gamma^2c_1\right)^2(1-c_1^2)^2
+9\left(-1+2t\right)\gamma^3(1-c_1^2)^3 \notag\\
&\quad
+18\gamma(1-c_1^2)^2
\Bigl(
\gamma^3c_1^2-(1-|\gamma|^2)
\bigl(-\rho(1-|\eta|^2)+2\gamma\eta c_1+\eta^2\overline{\gamma}\bigr)
\Bigr) \notag\\
&\quad
+c_1^2\Bigl(
\left(-9+3t-46t^2\right)\gamma^2(1-c_1^2)^2
+18\left(-1+t\right)(1-c_1^2)
\Bigl(
\gamma^3c_1^2 \notag\\
&\quad
-(1-|\gamma|^2)
\bigl(-\rho(1-|\eta|^2)+2\gamma\eta c_1+\eta^2\overline{\gamma}\bigr)
\Bigr)
\Bigr),
\end{align}
where $|\gamma|\le 1$, $|\eta|\le 1$, and $|\rho|\le 1$. Next, we convert the above expression in the following form
\begin{align}\label{eq:3.11}
144\, H_{3}(1)=A_{1}+ B_{1} \eta+ C_{1} \eta^2+ D_{1} \rho, 
\end{align}
where, 
\begin{align*}
A_1
&= -\left(1-3t+9t^2+9t^3\right)c_1^6
-\left(3-2t+21t^2\right)\gamma c_1^4\left(-1+c_1^2\right) \\
&\quad
+2\gamma^4 c_1^2\left(-1+c_1^2\right)^2
-\gamma^2 c_1^2\left(-1+c_1^2\right)
\left(-9+c_1^2+2t^2\left(-23+5c_1^2\right)+t\left(3+9c_1^2\right)\right) \\
&\quad
-\gamma^3\left(-1+c_1^2\right)
\left(-3\left(3+2c_1^2+c_1^4\right)+2t\left(9-4c_1^2+4c_1^4\right)\right), \\[2mm]
B_1
&= 4\left(-1+|\gamma|^2\right)c_1\left(-1+c_1^2\right)
\Bigl(
\left(2-3t+9t^2\right)c_1^2
+\gamma^2\left(-1+c_1^2\right) \\
&\qquad
+\gamma\left(6-7t+3c_1^2-2tc_1^2\right)
\Bigr), \\[2mm]
C_1
&= -2\left(-1+|\gamma|^2\right)\left(-1+c_1^2\right)
\Bigl(
8-8c_1^2+8|\gamma|^2\left(-1+c_1^2\right)
+9\left(-1+t\right)c_1^2\overline{\gamma} \\
&\qquad
-9\gamma\left(-1+c_1^2\right)\overline{\gamma}
\Bigr), \\[2mm]
D_1
&= -18\left(-1+|\gamma|^2\right)\left(-1+|\eta|^2\right)\left(-1+c_1^2\right)
\left(\gamma+\left(-1+t\right)c_1^2-\gamma c_1^2\right).
\end{align*}
Now taking modulus on both sides of \eqref{eq:3.11}, writing \(p_1:=c_1\), using
\[
|\gamma|=x,\qquad |\eta|=y,\qquad |\rho|\le 1,
\]
and applying the triangle inequality, we obtain
\begin{align}\label{eq:3.12}
H(p_1,x,y,t)
&:= \left(1-3t+9t^2+9t^3\right)p_1^6
+ x\left(\left(3-2t+21t^2\right)p_1^4(1-p_1^2)\right) \notag\\
&\quad
+ x^2\left(p_1^2(1-p_1^2)\left(9-p_1^2+2t^2(23-5p_1^2)-t(3+9p_1^2)\right)\right) \notag\\
&\quad
+ x^3\left(3(3+2p_1^2+p_1^4)+2t(-9+4p_1^2-4p_1^4)\right) 
+ x^4\left(2p_1^2(1-p_1^2)^2\right) \notag\\
&\quad
+ y\Bigl(4(1-x^2)p_1(1-p_1^2)
\bigl((2-3t+9t^2)p_1^2+x^2(1-p_1^2)\notag\\
&\quad
+x(6-7t+3p_1^2-2tp_1^2)\bigr)\Bigr) \notag\\
&\quad
+ y^2\Bigl(2(1-x^2)(1-p_1^2)
\bigl(8(1-x^2)(1-p_1^2)+9(x(1-p_1^2)+(1-t)p_1^2)x\bigr)\Bigr) \notag\\
&\quad
+ 18(1-x^2)(1-y^2)(1-p_1^2)\bigl(x(1-p_1^2)+(1-t)p_1^2\bigr).
\end{align}
It is easy to see that the coefficient of the coefficient of the term linear in  $y,$ precisely 
\[
4(1-x^2)p_1(1-p_1^2)
\bigl((2-3t+9t^2)p_1^2+x^2(1-p_1^2)+x(6-7t+3p_1^2-2tp_1^2)\bigr)\geq 0
\] 
is non-negative which implies we can replace $y^{1}=1,$ and maxima will increase. We define the new function where $y^{1}=1$ as $H_{1}(p_1, x, y, t),$ which can be written as
\begin{align}\label{eq:3.13}
H_1(p_1,x,y,t)
&:= \left(1-3t+9t^2+9t^3\right)p_1^6
+ x\left(\left(3-2t+21t^2\right)p_1^4(1-p_1^2)\right) \notag\\
&\quad
+ x^2\left(p_1^2(1-p_1^2)\left(9-p_1^2+2t^2(23-5p_1^2)-t(3+9p_1^2)\right)\right) \notag\\
&\quad
+ x^3\left(3(3+2p_1^2+p_1^4)+2t(-9+4p_1^2-4p_1^4)\right) 
+ x^4\left(2p_1^2(1-p_1^2)^2\right) \notag\\
&\quad
+ 4(1-x^2)p_1(1-p_1^2)
\bigl((2-3t+9t^2)p_1^2+x^2(1-p_1^2)\notag\\
&\quad
+x(6-7t+3p_1^2-2tp_1^2)\bigr) \notag\\
&\quad
+ y^2\Bigl(2(1-x^2)(1-p_1^2)
\bigl(8(1-x^2)(1-p_1^2)+9(x(1-p_1^2)+(1-t)p_1^2)x\bigr)\Bigr) \notag\\
&\quad
+ 18(1-x^2)(1-y^2)(1-p_1^2)\bigl(x(1-p_1^2)+(1-t)p_1^2\bigr).
\end{align}
Now our aim is to maximize the function $H_{1}(p_1, x, y, t)$ over the region $\mathcal{D}:=\{(p_1, x, y, t)\in [0, 1]\times[0, 1]\times[0, 1]\times[0, 1/2]\}.$\\
It is easy to see that the function $H_{1}$ is a quadratic polynomial in the variable $y,$ since $H_{1}$ does not have the term with $y$, only constant term and term with $y^2,$ the maximum occurs at one of the end points either $y=0$ or $y=1.$ We denote the function $R_{1}(p_1, x, t):=H_{1}(p_1, x, 1, t)$ and $R_{2}(p_1, x, t):=H_{1}(p_1, x, 0, t).$
\[
\begin{aligned}
R_1(p_1,x,t):={}&p_1^6 (1 - 3 t + 9 t^2 + 9 t^3)
+ p_1^4 (1 - p_1^2) (3 - 2 t + 21 t^2) x \\
&+ p_1^2 (1 - p_1^2) \bigl(9 - p_1^2 - (3 + 9 p_1^2) t + 2 (23 - 5 p_1^2) t^2\bigr) x^2 \\
&+ (1 - p_1^2) \bigl(3 (3 + 2 p_1^2 + p_1^4) - 2 (9 - 4 p_1^2 + 4 p_1^4) t\bigr) x^3 \\
&+ 2 p_1^2 ( -1 + p_1^2)^2 x^4 + 4 p_1 (1 - p_1^2) (1 - x^2)
(p_1^2 (2 - 3 t + 9 t^2) + (6 (1 - t) \\
&+ 2 p_1^2 (1 - t) + t) x + (1 - p_1^2) x^2) + 2 (1 - p_1^2) (1 - x^2)
\bigl(9 x (p_1^2 (1 - t)\\
& + (1 - p_1^2) x) + 8 (1 - p_1^2) (1 - x^2)\bigr).
\end{aligned}
\]

\medskip
\noindent
We claim that
\[
\max \Bigl\{ R_1(p_1,x,t): 0 \le p_1 \le 1,\ 0 \le x \le 1,\ 0 \le t \le \tfrac12 \Bigr\}=16.
\]
Taking, 
$
p:=p_1, u:=2t, \widetilde R(p,x,u):=R_1\!\left(p,x,{u}/{2}\right).
$
 We have 
$
0\le p\le 1, 0\le x\le 1, 0\le u\le 1,
$
and \(\widetilde R\) is a polynomial of tri-degree $(6, 4, 3)$ on the unit cube.\\[2mm]
We first apply the Bernstein method on the whole cube. Write
\[
\widetilde R(p,x,u)=\sum_{i=0}^{6}\sum_{j=0}^{4}\sum_{k=0}^{3}
\beta_{i,j,k}\,B_i^6(p)\,B_j^4(x)\,B_k^3(u),
\]
where
\[
B_r^N(s)=\binom{N}{r}s^r(1-s)^{N-r}.
\]
By the basic Bernstein enclosure property,
\[
\widetilde R(p,x,u)\le \max_{0\le i\le 6,\ 0\le j\le 4,\ 0\le k\le 3}\beta_{i,j,k}.
\]
We now calculate these coefficients. First we  write
\[
\widetilde R(p,x,u)=\sum_{r=0}^{6}\sum_{s=0}^{4}\sum_{\ell=0}^{3}
\alpha_{r,s,\ell}\,p^{r}x^{s}u^{\ell}.
\]
Next, for each monomial, use the standard conversion formula from the monomial basis to the Bernstein basis
\[
p^{r}=\sum_{i=r}^{6}\frac{\binom{i}{r}}{\binom{6}{r}}\,B_{i}^{6}(p),\qquad
x^{s}=\sum_{j=s}^{4}\frac{\binom{j}{s}}{\binom{4}{s}}\,B_{j}^{4}(x),\qquad
u^{\ell}=\sum_{k=\ell}^{3}\frac{\binom{k}{\ell}}{\binom{3}{\ell}}\,B_{k}^{3}(u).
\]
Substituting these identities into the monomial expansion of \(\widetilde R\) and computing like Bernstein terms, we obtain
\[
\widetilde R(p,x,u)=\sum_{i=0}^{6}\sum_{j=0}^{4}\sum_{k=0}^{3}
\beta_{i,j,k}\,B_i^6(p)\,B_j^4(x)\,B_k^3(u),
\]
where
\begin{align*}
\beta_{i,j,k}
=
\sum_{r=0}^{i}\sum_{s=0}^{j}\sum_{\ell=0}^{k}
\alpha_{r,s,\ell}\,
\frac{\binom{i}{r}}{\binom{6}{r}}
\frac{\binom{j}{s}}{\binom{4}{s}}
\frac{\binom{k}{\ell}}{\binom{3}{\ell}}.
\end{align*}
For example, the coefficient \(\beta_{1,1,0}\) receives contributions only from the constant term \(16\) and the term \(24px\). Hence,
\[
\beta_{1,1,0}
=
16+24\,
\frac{\binom{1}{1}}{\binom{6}{1}}
\frac{\binom{1}{1}}{\binom{4}{1}}
=
16+24\cdot\frac{1}{6}\cdot\frac{1}{4}
=
17.
\]
In the same way we compute all the Bernstein coefficients \(\beta_{i,j,k}\). 
For convenience, write
\[
M_k=(\beta_{i,j,k})_{0\le i\le 6,\ 0\le j\le 4},
\qquad k=0,1,2,3,
\]
where the rows correspond to \(i=0,1,\dots,6\) and the columns correspond to
\(j=0,1,\dots,4\). The exact Bernstein coefficients are as follows.\\[2mm]
For \(k=0\),
\[
M_0=
\begin{pmatrix}
	16 & 16 & \frac{41}{3} & \frac{45}{4} & 9 \\[5mm]
	16 & 17 & \frac{142}{9} & \frac{163}{12} & 9 \\[5mm]
	\frac{208}{15} & \frac{97}{6} & \frac{503}{30} & \frac{467}{30} & \frac{143}{15} \\[5mm]
	10 & \frac{137}{10} & \frac{33}{2} & \frac{84}{5} & \frac{53}{5} \\[5mm]
	\frac{88}{15} & \frac{637}{60} & \frac{1363}{90} & \frac{333}{20} & \frac{169}{15} \\[5mm]
	\frac{8}{3} & \frac{85}{12} & \frac{23}{2} & \frac{53}{4} & \frac{29}{3} \\[5mm]
	1 & 1 & 1 & 1 & 1
\end{pmatrix}.
\]\\
For \(k=1\), we have \\
\[
M_1=
\begin{pmatrix}
	16 & 16 & \frac{41}{3} & \frac{21}{2} & 6 \\[5mm]
	16 & \frac{607}{36} & \frac{31}{2} & \frac{113}{9} & 6 \\[5mm]
	\frac{208}{15} & \frac{2851}{180} & \frac{2899}{180} & \frac{853}{60} & \frac{611}{90} \\[5mm]
	\frac{99}{10} & \frac{1567}{120} & \frac{919}{60} & \frac{181}{12} & \frac{251}{30} \\[5mm]
	\frac{82}{15} & \frac{1711}{180} & \frac{1201}{90} & \frac{523}{36} & \frac{143}{15} \\[5mm]
	2 & \frac{23}{4} & \frac{19}{2} & \frac{395}{36} & \frac{71}{9} \\[5mm]
	\frac12 & \frac12 & \frac12 & \frac12 & \frac12
\end{pmatrix}.
\]\\
For \(k=2\), we have \\
\[
M_2=
\begin{pmatrix}
	16 & 16 & \frac{41}{3} & \frac{39}{4} & 3 \\[5mm]
	16 & \frac{301}{18} & \frac{137}{9} & \frac{415}{36} & 3 \\[5mm]
	\frac{208}{15} & \frac{698}{45} & \frac{8363}{540} & \frac{2339}{180} & \frac{43}{10} \\[5mm]
	\frac{199}{20} & \frac{377}{30} & \frac{5179}{360} & \frac{553}{40} & \frac{69}{10} \\[5mm]
	\frac{17}{3} & \frac{6497}{720} & \frac{13291}{1080} & \frac{3217}{240} & \frac{329}{36} \\[5mm]
	\frac{7}{3} & \frac{89}{16} & \frac{211}{24} & \frac{1459}{144} & \frac{277}{36} \\[5mm]
	\frac34 & \frac34 & \frac34 & \frac34 & \frac34
\end{pmatrix}.
\]\\
For \(k=3\), we have \\
\[
M_3=
\begin{pmatrix}
	16 & 16 & \frac{41}{3} & 9 & 0 \\[5mm]
	16 & \frac{199}{12} & \frac{269}{18} & \frac{21}{2} & 0 \\[5mm]
	\frac{208}{15} & \frac{911}{60} & \frac{671}{45} & \frac{119}{10} & \frac{31}{15} \\[5mm]
	\frac{203}{20} & \frac{489}{40} & \frac{329}{24} & \frac{521}{40} & \frac{31}{5} \\[5mm]
	\frac{97}{15} & \frac{2201}{240} & \frac{4331}{360} & \frac{3187}{240} & \frac{121}{12} \\[5mm]
	\frac{11}{3} & \frac{313}{48} & \frac{75}{8} & \frac{515}{48} & \frac{109}{12} \\[5mm]
	\frac{23}{8} & \frac{23}{8} & \frac{23}{8} & \frac{23}{8} & \frac{23}{8}
\end{pmatrix}.
\]\\
Hence, by direct inspection of the above coefficients, we obtain
\[
\max_{0\le i\le 6,\ 0\le j\le 4,\ 0\le k\le 3}\beta_{i,j,k}=17,
\qquad \text{attained at } \beta_{1,1,0}=17.
\]
Thus the whole-cube Bernstein step gives only
\[
\widetilde R(p,x,u)\le 17,
\]
which is not yet sufficient as per our claim.\\[2mm]
We therefore subdivide only in the \(p\)- and \(x\)-directions. For \(0\le i,j\le 7\), let
\[
Q_{ij}:=\left[\frac{i}{8},\frac{i+1}{8}\right]\times
\left[\frac{j}{8},\frac{j+1}{8}\right]\times[0,1].
\]
On each \(Q_{ij}\), perform the affine change of variables
\[
p=\frac{i+\xi}{8},\qquad x=\frac{j+\eta}{8},\qquad u=\zeta,
\qquad (\xi,\eta,\zeta)\in[0, 1]\times[0, 1]\times[0, 1],
\]
and expand the transformed polynomial in the Bernstein basis of degrees $(6, 4, 3)$,
\[
\widetilde R\!\left(\frac{i+\xi}{8},\frac{j+\eta}{8},\zeta\right)
=
\sum_{a=0}^{6}\sum_{b=0}^{4}\sum_{c=0}^{3}
\beta^{(ij)}_{a,b,c}\,B_a^6(\xi)\,B_b^4(\eta)\,B_c^3(\zeta).
\]
Take, 
\[
M_{ij}:=\max_{a,b,c}\beta^{(ij)}_{a,b,c}.
\]
For convenience, we list all the values \(M_{ij}\) explicitly,
\begin{align*}
	M_{00}&=\frac{1025}{64}, &
	M_{01}&=\frac{83341001}{5242880}, &
	M_{02}&=\frac{196377359}{12582912}, &
	M_{03}&=\frac{8143212893}{536870912}, \\[4mm]
	M_{04}&=\frac{7584635}{524288}, &
	M_{05}&=\frac{7255503077}{536870912}, &
	M_{06}&=\frac{413235335}{33554432}, &
	M_{07}&=\frac{5829969821}{536870912}, \\[4mm]
	M_{10}&=\frac{8446662725}{536870912}, &
	M_{11}&=\frac{8450945213}{536870912}, &
	M_{12}&=\frac{523473491}{33554432}, &
	M_{13}&=\frac{2566321889}{167772160}, \\[4mm]
	M_{14}&=\frac{122237}{8192}, &
	M_{15}&=\frac{119402813}{8388608}, &
	M_{16}&=\frac{6885503}{524288}, &
	M_{17}&=\frac{96980213}{8388608}, \\[4mm]
	M_{20}&=\frac{124671773}{8388608}, &
	M_{21}&=\frac{7976843}{524288}, &
	M_{22}&=\frac{2003057}{131072}, &
	M_{23}&=\frac{127833653}{8388608}, \\[4mm]
	M_{24}&=\frac{1471477}{98304}, &
	M_{25}&=\frac{7790759037}{536870912}, &
	M_{26}&=\frac{457491407}{33554432}, &
	M_{27}&=\frac{6504238197}{536870912}, \\[4mm]
	M_{30}&=\frac{7159139037}{536870912}, &
	M_{31}&=\frac{476238267}{33554432}, &
	M_{32}&=\frac{7899738677}{536870912}, &
	M_{33}&=\frac{498815877}{33554432}, \\[4mm]
	M_{34}&=\frac{7782363}{524288}, &
	M_{35}&=\frac{1948147173}{134217728}, &
	M_{36}&=\frac{7198027}{524288}, &
	M_{37}&=\frac{19383031}{1572864}, \\[4mm]
	M_{40}&=\frac{1483973}{131072}, &
	M_{41}&=\frac{103451}{8192}, &
	M_{42}&=\frac{1783613}{131072}, &
	M_{43}&=\frac{907}{64}, \\[4mm]
	M_{44}&=\frac{234141}{16384}, &
	M_{45}&=\frac{1863845}{131072}, &
	M_{46}&=\frac{111743}{8192}, &
	M_{47}&=\frac{1613501}{131072}, \\[4mm]
	M_{50}&=\frac{4823711405}{536870912}, &
	M_{51}&=\frac{355012907}{33554432}, &
	M_{52}&=\frac{6377006213}{536870912}, &
	M_{53}&=\frac{6695843}{524288}, \\[4mm]
	M_{54}&=\frac{7056906317}{536870912}, &
	M_{55}&=\frac{7066669577}{536870912}, &
	M_{56}&=\frac{431990015}{33554432}, &
	M_{57}&=\frac{6350745797}{536870912}, \\[4mm]
	M_{60}&=\frac{55817973}{8388608}, &
	M_{61}&=\frac{4217859}{524288}, &
	M_{62}&=\frac{78811037}{8388608}, &
	M_{63}&=\frac{85185}{8192}, \\[4mm]
	M_{64}&=\frac{91831653}{8388608}, &
	M_{65}&=\frac{34734761}{3145728}, &
	M_{66}&=\frac{5729495}{524288}, &
	M_{67}&=\frac{85768029}{8388608}, 
	\end{align*}
	\begin{align*}
	M_{70}&=\frac{2498276837}{536870912}, &
	M_{71}&=\frac{180167627}{33554432}, &
	M_{72}&=\frac{3222265757}{536870912}, &
	M_{73}&=\frac{3494483}{524288}, \\[4mm]
	M_{74}&=\frac{3828389717}{536870912}, &
	M_{75}&=\frac{973980383}{134217728}, &
	M_{76}&=\frac{242567447}{33554432}, &
	M_{77}&=\frac{3696095117}{536870912}.\\
\end{align*}
Equivalently, each entry in the \((i,j)\)-position of the above list is precisely the number \(M_{ij}\).
Again by the consequence of  Bernstein method, we have 
\[
\widetilde R(p,x,u)\le M_{ij}\qquad\text{on }Q_{ij}.
\]
Now the exact rational computation gives
\[
M_{00}=\frac{1025}{64},
\]
and, for all \((i,j)\neq(0,0)\),
\[
M_{ij}<16.
\]
In fact,
\[
\max_{(i,j)\neq(0,0)} M_{ij}
=
\frac{83341001}{5242880}
<16.
\]
Therefore the Bernstein method already proves
\[
\widetilde R(p,x,u)\le 16
\]
on the union of all \(Q_{ij}\) except
\[
Q_{00}=\left[0,\frac18\right]\times\left[0,\frac18\right]\times[0,1].
\]
So it remains only to show that
\[
\widetilde R(p,x,u)\le 16
\qquad\text{for }(p,x,u)\in Q_{00}.
\]
Set, 
\[
F(p,x,u):=16-\widetilde R(p,x,u).
\]
We write \(F\) as a polynomial in \(x\):
\[
F(p,x,u)=a_0(p,u)+a_1(p,u)x+a_2(p,u)x^2+a_3(p,u)x^3+a_4(p)x^4,
\]
where
\begin{align*}
a_0(p,u)
&=
\frac{p^2}{8}
\Bigl(
256 -64p -128p^2 +64p^3 -8p^4
+48pu -48p^3u +12p^4u \\
&\quad
-72pu^2 +72p^3u^2 -18p^4u^2 -9p^4u^3
\Bigr), \\[2mm]
a_1(p,u)
&=
-\frac{p(1-p^2)}{4}
\Bigl(
96-40u+72p-36pu+32p^2-16p^2u+12p^3-4p^3u+21p^3u^2
\Bigr), \\[2mm]
a_2(p,u)
&=
14-4p-37p^2+\left(\frac32u-\frac{23}{2}u^2\right)p^2
+(16-6u+9u^2)p^3 \\
&\quad
+(24+3u+14u^2)p^4
+(-12+6u-9u^2)p^5
+\left(-1-\frac92u-\frac52u^2\right)p^6, \\[2mm]
a_3(p,u)
&=
-9+9u+(24-10u)p+(21-22u)p^2+(-16+6u)p^3 \\
&\quad
+(-15+17u)p^4+(-8+4u)p^5+(3-4u)p^6, \\[2mm]
a_4(p)
&=
2+4p-6p^2-8p^3+6p^4+4p^5-2p^6.
\end{align*}
We now estimate these coefficients on
$
0\le p\le 1/8,\, 0\le x\le 1/8,\, 0\le u\le 1.
$
For \(a_0\), write
\[
a_0(p,u)=\frac{p^2}{8}A(p,u),
\]
where
\[
A(p,u)
=
256 + p(-64+48u-72u^2)-128p^2+p^3(64-48u+72u^2)+p^4(-8+12u-18u^2-9u^3).
\]
For \(0\le u\le 1\),
\[
-64+48u-72u^2\ge -88,
\]
\[
64-48u+72u^2\ge 56,
\]
\[
-8+12u-18u^2-9u^3\ge -23.
\]
Hence
\[
A(p,u)\ge 256-88p-128p^2+56p^3-23p^4:=O(p).
\]
The right-hand side is decreasing on \(\bigl[0,1/8\bigr]\), because its derivative is
\[
O'(p)=-88-256p+168p^2-92p^3<0.
\]
Therefore
\[
A(p,u)\ge
256-\frac{88}{8}-\frac{128}{64}+\frac{56}{512}-\frac{23}{4096}
=
\frac{995753}{4096}
>240.
\]
So, we have 
$
a_0(p,u)\ge 30p^2.
$
For \(a_1\), let
\[
B(p,u):=96-40u+72p-36pu+32p^2-16p^2u+12p^3-4p^3u+21p^3u^2.
\]
Since \(0\le u\le 1\) and \(0\le p\le 1/8\),
\[
B(p,u)\le 96+72p+32p^2+33p^3
\le 96+9+\frac12+\frac{33}{512}
<108.
\]
Hence
\[
a_1(p,u)\ge -\frac{108}{4}p=-27p.
\]
For \(a_2\), we use the  bounds
\[
\frac32u-\frac{23}{2}u^2\ge -\frac{23}{2},
\qquad
16-6u+9u^2\ge 15,
\]
also
\[
24+3u+14u^2\ge 24,
\quad
-12+6u-9u^2\ge -15,
\quad
-1-\frac92u-\frac52u^2\ge -8.
\]
Therefore
\[
a_2(p,u)\ge
14-4p-\frac{97}{2}p^2+15p^3+24p^4-15p^5-8p^6:=L(p).
\]
The right-hand side is decreasing on \(\bigl[0,1/8\bigr]\), since its derivative is
\[
L'(p)=-4-97p+45p^2+96p^3-75p^4-48p^5<0.
\]
Hence
\[
a_2(p,u)\ge
14-\frac{4}{8}-\frac{97}{2\cdot 64}
+\frac{15}{512}
+\frac{24}{4096}
-\frac{15}{32768}
-\frac{8}{262144}
=
\frac{26167}{2048}
>
\frac{51}{4}.
\]
For \(a_3\), observe that
\begin{align*}
a_3(p,u)+9
&=
u\bigl(9-10p-22p^2+6p^3+17p^4+4p^5-4p^6\bigr)
+\\
&\quad p\bigl(24+21p-16p^2-15p^3-8p^4+3p^5\bigr).
\end{align*}
Now
\[
9-10p-22p^2+6p^3+17p^4+4p^5-4p^6
\ge 9-10p-22p^2
>
9-\frac{10}{8}-\frac{22}{64}
=
\frac{237}{32}
>0,
\]
and
\[
24+21p-16p^2-15p^3-8p^4+3p^5
\ge
24-16p^2-15p^3-8p^4
>
24-\frac14-\frac{15}{512}-\frac{1}{64}
>0.
\]
Thus
$
a_3(p,u)\ge -9.
$
Finally,
\[
a_4(p)=2+4p-6p^2-8p^3+6p^4+4p^5-2p^6
\ge 2+4p-6p^2-8p^3.
\]
Since
\[
4p-6p^2-8p^3=2p(2-3p-4p^2)\ge 0
\qquad\text{for }\quad 0\le p\le 1/8,
\]
we get
$
a_4(p)\ge 2.
$
Combining the coefficient bounds, we obtain on \(Q_{00}\),
\[
F(p,x,u)\ge 30p^2-27px+\frac{51}{4}x^2-9x^3+2x^4.
\]
As \(0\le x\le 1/8\), we have
$
-9x^3\ge -9/8x^2$, and $
\,
2x^4\ge 0,
$
and hence
\[
F(p,x,u)\ge 30p^2-27px+\left(\frac{51}{4}-\frac98\right)x^2
=30p^2-27px+\frac{93}{8}x^2:=V(p, x).
\]
For each fixed \(x\), this is a quadratic polynomial in \(p\) with leading coefficient \(30\), which is a positive quantity and the  discriminant is
$
(-27x)^2(- 15)\cdot {93}x^2=-666x^2\le 0.
$
Hence
$
V(p,x)\ge 0,
$
thus,
\[
F(p,x,u)\ge 0 \quad
\text{ on }Q_{00}.
\]
So
\[
\widetilde R(p,x,u)\le 16 \quad
\text{ on }Q_{00}.
\]
We have thus proved
\[
\widetilde R(p,x,u)\le 16
\qquad\text{for all }(p,x,u)\in[0, 1]\times[0, 1]\times[0, 1].
\]
Returning to \(t={u}/{2}\), we conclude that
\[
R_1(p_1,x,t)\le 16
\qquad\text{for }0\le p_1\le 1,\ 0\le x\le 1,\ 0\le t\le 1/2.
\]
To see that this bound is indeed sharp, evaluate at the point 
$
p_1=0,\,  x=0.
$
Then
\[
R_1(0,0,t)=16
\quad\text{ for every }0\le t\le 1/2.
\]
Hence the maximum is exactly \(16\).\\[2mm]
Now we consider the part where $y=0,$ and the function is defined as $R_{2}(p_1, x, t):=H_{1}(p_1, x, 0, t).$
\[
\begin{aligned}
R_2(p_1,x,t):={}&p_1^6 (1 - 3 t + 9 t^2 + 9 t^3) + 
p_1^4 (1 - p_1^2) (3 - 2 t + 21 t^2) x \\
&+ p_1^2 (1 - p_1^2) \bigl(9 - p_1^2 - (3 + 9 p_1^2) t + 2 (23 - 5 p_1^2) t^2\bigr) x^2 \\
&+ (1 - p_1^2) \bigl(3 (3 + 2 p_1^2 + p_1^4) - 2 (9 - 4 p_1^2 + 4 p_1^4) t\bigr) x^3 \\
&+ 2 p_1^2 (-1 + p_1^2)^2 x^4 \\
&+ 18 (1 - p_1^2) \bigl(p_1^2 (1 - t) + (1 - p_1^2) x\bigr) (1 - x^2) \\
&+ 4 p_1 (1 - p_1^2) (1 - x^2) \bigl(p_1^2 (2 - 3 t + 9 t^2) \\
&+ (6 (1 - t) + 2 p_1^2 (1 - t) + t) x + (1 - p_1^2) x^2\bigr).
\end{aligned}
\]
For this case we  seek an upper bound in the region
$
0\le p_1\le 1,\, 0\le x\le 1,\, 0\le t\le 1/2.
$
Let us take, 
\[
p:=p_1,\qquad u:=2t,\qquad \widetilde R_2(p,x,u):=R_2\!\left(p,x,\frac u2\right).
\]
Then $\mathcal{R}:=\{(p, x, u)\in \mathbb{R}^3:(p,x,u)\in[0, 1]\times[0, 1]\times[0, 1]\}$, and
\[
\begin{aligned}
\widetilde R_2(p,x,u)&=\frac98 p^6u^3+\frac52 p^6u^2x^2-\frac{21}{4}p^6u^2x+\frac94p^6u^2
+4p^6ux^3+\frac92p^6ux^2+p^6ux\\[2mm]
&\quad-\frac32p^6u +2p^6x^4-3p^6x^3+p^6x^2-3p^6x+p^6
+9p^5u^2x^2-9p^5u^2\\[2mm]
&\quad-4p^5ux^3-6p^5ux^2+4p^5ux+6p^5u -4p^5x^4+8p^5x^3+12p^5x^2\\[2mm]
&\quad-8p^5x-8p^5
-14p^4u^2x^2+\frac{21}{4}p^4u^2x-8p^4ux^3-12p^4ux^2\\[2mm]
&\quad-p^4ux+9p^4u -4p^4x^4-21p^4x^3+8p^4x^2+21p^4x-18p^4
-9p^3u^2x^2\\[2mm]
&\quad+9p^3u^2-6p^3ux^3+6p^3ux^2+6p^3ux-6p^3u +8p^3x^4+16p^3x^3\\[2mm]
&\quad-16p^3x^2-16p^3x+8p^3
+\frac{23}{2}p^2u^2x^2+13p^2ux^3+\frac{15}{2}p^2ux^2-9p^2u\\[2mm]
&\quad +2p^2x^4+33p^2x^3-9p^2x^2-36p^2x+18p^2
+10pux^3-10pux-4px^4\\[2mm]
&\quad-24px^3+4px^2+24px-9ux^3-9x^3+18x.
\end{aligned}
\]
We first expand \(\widetilde R_2\) in the tensor-product Bernstein basis of degree $(6, 4, 3)$:
\[
\widetilde R_2(p,x,u)=\sum_{i=0}^{6}\sum_{j=0}^{4}\sum_{k=0}^{3}
\beta_{i,j,k}\,B_i^6(p)\,B_j^4(x)\,B_k^3(u),
\]
where
\[
B_r^N(s)=\binom Nr s^r(1-s)^{N-r}.
\]
Write
\[
M_k=(\beta_{i,j,k})_{0\le i\le 6,\ 0\le j\le 4},
\qquad k=0,1,2,3.
\]
The rows correspond to \(i=0,1,\dots,6\), and the columns correspond to \(j=0,1,\dots,4\).\\[2mm]
For \(k=0\),
\[
M_0=
\begin{pmatrix}
0 & \frac92 & 9 & \frac{45}{4} & 9 \\[5mm]
0 & \frac{11}{2} & \frac{100}{9} & \frac{163}{12} & 9 \\[5mm]
\frac65 & \frac{71}{10} & \frac{1181}{90} & \frac{467}{30} & \frac{143}{15} \\[5mm]
4 & \frac{19}{2} & \frac{149}{10} & \frac{84}{5} & \frac{53}{5} \\[5mm]
\frac{38}{5} & \frac{241}{20} & \frac{159}{10} & \frac{333}{20} & \frac{169}{15} \\[5mm]
\frac{26}{3} & \frac{139}{12} & \frac{27}{2} & \frac{53}{4} & \frac{29}{3} \\[5mm]
1 & 1 & 1 & 1 & 1
\end{pmatrix}.
\]
For \(k=1\),
\[
M_1=
\begin{pmatrix}
0 & \frac92 & 9 & \frac{21}{2} & 6 \\[5mm]
0 & \frac{193}{36} & \frac{65}{6} & \frac{113}{9} & 6 \\[5mm]
1 & \frac{298}{45} & \frac{2231}{180} & \frac{853}{60} & \frac{611}{90} \\[5mm]
\frac{33}{10} & \frac{1009}{120} & \frac{811}{60} & \frac{181}{12} & \frac{251}{30} \\[5mm]
\frac{31}{5} & \frac{917}{90} & \frac{413}{30} & \frac{523}{36} & \frac{143}{15} \\[5mm]
7 & \frac{19}{2} & \frac{67}{6} & \frac{395}{36} & \frac{71}{9} \\[5mm]
\frac12 & \frac12 & \frac12 & \frac12 & \frac12
\end{pmatrix}.
\]

For \(k=2\),
\[
M_2=
\begin{pmatrix}
0 & \frac92 & 9 & \frac{39}{4} & 3 \\[5mm]
0 & \frac{47}{9} & \frac{95}{9} & \frac{415}{36} & 3 \\[5mm]
\frac45 & \frac{553}{90} & \frac{6323}{540} & \frac{2339}{180} & \frac{43}{10} \\[5mm]
\frac{11}{4} & \frac{112}{15} & \frac{4459}{360} & \frac{553}{40} & \frac{69}{10} \\[5mm]
\frac{27}{5} & \frac{6449}{720} & \frac{13387}{1080} & \frac{3217}{240} & \frac{329}{36} \\[5mm]
\frac{19}{3} & \frac{137}{16} & \frac{81}{8} & \frac{1459}{144} & \frac{277}{36} \\[5mm]
\frac34 & \frac34 & \frac34 & \frac34 & \frac34
\end{pmatrix}.
\]

For \(k=3\),
\[
M_3=
\begin{pmatrix}
0 & \frac92 & 9 & 9 & 0 \\[5mm]
0 & \frac{61}{12} & \frac{185}{18} & \frac{21}{2} & 0 \\[5mm]
\frac35 & \frac{17}{3} & \frac{166}{15} & \frac{119}{10} & \frac{31}{15} \\[5mm]
\frac{47}{20} & \frac{267}{40} & \frac{1381}{120} & \frac{521}{40} & \frac{31}{5} \\[5mm]
\frac{26}{5} & \frac{401}{48} & \frac{4243}{360} & \frac{3187}{240} & \frac{121}{12} \\[5mm]
\frac{20}{3} & \frac{421}{48} & \frac{83}{8} & \frac{515}{48} & \frac{109}{12} \\[5mm]
\frac{23}{8} & \frac{23}{8} & \frac{23}{8} & \frac{23}{8} & \frac{23}{8}
\end{pmatrix}.
\]
By the Bernstein enclosure property,
\[
\widetilde R_2(p,x,u)\le \max_{0\le i\le 6,\ 0\le j\le 4,\ 0\le k\le 3}\beta_{i,j,k}.
\]
From the matrices above, the largest coefficient is
\[
\beta_{3,3,0}=\frac{84}{5}.
\]
Hence
\[
\widetilde R_2(p,x,u)\le \frac{84}{5}
\qquad\text{for all }(p,x,u)\in[0, 1]\times[0, 1]\times[0, 1].
\]
Returning to \(u=2t\), we obtain
\[
R_2(p_1,x,t)\le \frac{84}{5}
\qquad\text{for all }0\le p_1\le 1,\ 0\le x\le 1,\ 0\le t\le 1/2.
\]
We now subdivide the \((p_1,x)\)-rectangle into four parts and apply the Bernstein method again on each part, keeping \(0\le t\le 1/2\) unchanged. We have the following four sub-regions of the region $\mathcal{R}.$ 
\begin{align*}
Q_{11} &= \left[0,\frac12\right]\times\left[0,\frac12\right]\times[0,1],
\qquad
Q_{12} = \left[0,\frac12\right]\times\left[\frac12,1\right]\times[0,1], \\[3mm]
Q_{21} &= \left[\frac12,1\right]\times\left[0,\frac12\right]\times[0,1],
\qquad
Q_{22} = \left[\frac12,1\right]\times\left[\frac12,1\right]\times[0,1].
\end{align*}
The corresponding affine changes are
\begin{align*}
Q_{11}&:\quad p=\frac P2,\quad x=\frac X2,
\\[2mm]
Q_{12}&:\quad p=\frac P2,\quad x=\frac{1+X}{2},
\\[2mm]
Q_{21}&:\quad p=\frac{1+P}{2},\quad x=\frac X2,
\\[2mm]
Q_{22}&:\quad p=\frac{1+P}{2},\quad x=\frac{1+X}{2}.
\end{align*}
For each subbox we write the transformed polynomial in the Bernstein basis of degree $(6, 4, 3)$,
\[
\widetilde R_2^{(\nu)}(P,X,U)
=
\sum_{i=0}^6\sum_{j=0}^4\sum_{k=0}^3
\beta^{(\nu)}_{i,j,k}\,B_i^6(P)\,B_j^4(X)\,B_k^3(U),
\]
where \(\nu\in\{11,12,21,22\}\).\\[2mm]
For \(Q_{11}\), the coefficient matrices \(M_k^{(11)}=(\beta^{(11)}_{i,j,k})\) are
\[
M_0^{(11)}=
\begin{pmatrix}
0 & \frac94 & \frac92 & \frac{207}{32} & \frac{63}{8}\\[5mm]
0 & \frac52 & \frac{361}{72} & \frac{691}{96} & \frac{139}{16}\\[5mm]
\frac{3}{10} & \frac{119}{40} & \frac{8167}{1440} & \frac{15361}{1920} & \frac{143}{15}\\[5mm]
\frac{19}{20} & \frac{297}{80} & \frac{1039}{160} & \frac{5683}{640} & \frac{3329}{320}\\[5mm]
\frac{77}{40} & \frac{2999}{640} & \frac{4291}{576} & \frac{75247}{7680} & \frac{4321}{384}\\[5mm]
\frac{37}{12} & \frac{737}{128} & \frac{9685}{1152} & \frac{16325}{1536} & \frac{9185}{768}\\[5mm]
\frac{265}{64} & \frac{3401}{512} & \frac{4653}{512} & \frac{22745}{2048} & \frac{1571}{128}
\end{pmatrix},
\]

\[
M_1^{(11)}=
\begin{pmatrix}
0 & \frac94 & \frac92 & \frac{51}{8} & \frac{15}{2}\\[5mm]
0 & \frac{355}{144} & \frac{89}{18} & \frac{4037}{576} & \frac{197}{24}\\[5mm]
\frac14 & \frac{257}{90} & \frac{351}{64} & \frac{22099}{2880} & \frac{3211}{360}\\[5mm]
\frac{63}{80} & \frac{6619}{1920} & \frac{11773}{1920} & \frac{4283}{512} & \frac{18469}{1920}\\[5mm]
\frac{127}{80} & \frac{1349}{320} & \frac{823}{120} & \frac{69503}{7680} & \frac{19709}{1920}\\[5mm]
\frac{81}{32} & \frac{5815}{1152} & \frac{17413}{2304} & \frac{22175}{2304} & \frac{24775}{2304}\\[5mm]
\frac{433}{128} & \frac{731}{128} & \frac{8199}{1024} & \frac{20231}{2048} & \frac{5571}{512}
\end{pmatrix},
\]
\\[2mm]
\[
M_2^{(11)}=
\begin{pmatrix}
0 & \frac94 & \frac92 & \frac{201}{32} & \frac{57}{8}\\[5mm]
0 & \frac{175}{72} & \frac{39}{8} & \frac{491}{72} & \frac{371}{48}\\[5mm]
\frac15 & \frac{197}{72} & \frac{45791}{8640} & \frac{42359}{5760} & \frac{11983}{1440}\\[5mm]
\frac{103}{160} & \frac{3073}{960} & \frac{66767}{11520} & \frac{15151}{1920} & \frac{17083}{1920}\\[5mm]
\frac{53}{40} & \frac{29347}{7680} & \frac{219659}{34560} & \frac{9695}{1152} & \frac{13577}{1440}\\[5mm]
\frac{413}{192} & \frac{20801}{4608} & \frac{31817}{4608} & \frac{40793}{4608} & \frac{11323}{1152}\\[5mm]
\frac{747}{256} & \frac{10409}{2048} & \frac{3715}{512} & \frac{9241}{1024} & \frac{635}{64}
\end{pmatrix},
\]
\\[2mm]
\[
M_3^{(11)}=
\begin{pmatrix}
0 & \frac94 & \frac92 & \frac{99}{16} & \frac{27}{4}\\[5mm]
0 & \frac{115}{48} & \frac{173}{36} & \frac{1273}{192} & \frac{29}{4}\\[5mm]
\frac{3}{20} & \frac{157}{60} & \frac{737}{144} & \frac{6761}{960} & \frac{743}{96}\\[5mm]
\frac{83}{160} & \frac{1903}{640} & \frac{4213}{768} & \frac{19091}{2560} & \frac{659}{80}\\[5mm]
\frac{91}{80} & \frac{8967}{2560} & \frac{13691}{2304} & \frac{30319}{3840} & \frac{8393}{960}\\[5mm]
\frac{373}{192} & \frac{6385}{1536} & \frac{29713}{4608} & \frac{133}{16} & \frac{147}{16}\\[5mm]
\frac{1415}{512} & \frac{9779}{2048} & \frac{7017}{1024} & \frac{8767}{1024} & \frac{1205}{128}
\end{pmatrix}.
\]\\
It is easy to see that
\[
\max_{i,j,k}\beta^{(11)}_{i,j,k}=\frac{1571}{128},
\]
and this occurs in \(M_0^{(11)}\), last row and last column, that is, at
\[
\beta^{(11)}_{6,4,0}=\frac{1571}{128}.
\]
Hence
\[
\widetilde{R}_2^{(11)}(P,X,U)\le \frac{1571}{128},
\]
so equivalently
\[
R_2\le \frac{1571}{128}\approx 12.2734375
\qquad \text{on } Q_{11}.
\]
Therefore
\[
R_2(p,x,t)\le \max_{i,j,k}\beta^{(11)}_{i,j,k}
\qquad \text{for all }(p,x,t)\in Q_{11}.
\]
For \(Q_{12}\), the coefficient matrices \(M_k^{(12)}=(\beta^{(12)}_{i,j,k})\) are

\[
M_0^{(12)}=
\begin{pmatrix}
\frac{63}{8} & \frac{297}{32} & \frac{81}{8} & \frac{81}{8} & 9\\[5mm]
\frac{139}{16} & \frac{977}{96} & \frac{395}{36} & \frac{257}{24} & 9\\[5mm]
\frac{143}{15} & \frac{21247}{1920} & \frac{4249}{360} & \frac{5431}{480} & \frac{137}{15}\\[5mm]
\frac{3329}{320} & \frac{7633}{640} & \frac{1007}{80} & \frac{1907}{160} & \frac{47}{5}\\[5mm]
\frac{4321}{384} & \frac{32531}{2560} & \frac{76429}{5760} & \frac{3987}{320} & \frac{1169}{120}\\[5mm]
\frac{9185}{768} & \frac{6805}{512} & \frac{3955}{288} & \frac{1231}{96} & \frac{241}{24}\\[5mm]
\frac{1571}{128} & \frac{27527}{2048} & \frac{1761}{128} & \frac{3277}{256} & \frac{323}{32}
\end{pmatrix},
\]\\
\[
M_1^{(12)}=
\begin{pmatrix}
\frac{15}{2} & \frac{69}{8} & 9 & \frac{33}{4} & 6\\[5mm]
\frac{197}{24} & \frac{5419}{576} & \frac{1403}{144} & \frac{631}{72} & 6\\[5mm]
\frac{3211}{360} & \frac{3253}{320} & \frac{30151}{2880} & \frac{4477}{480} & \frac{2231}{360}\\[5mm]
\frac{18469}{1920} & \frac{83507}{7680} & \frac{5351}{480} & \frac{4759}{480} & \frac{791}{120}\\[5mm]
\frac{19709}{1920} & \frac{88169}{7680} & \frac{22501}{1920} & \frac{20077}{1920} & \frac{853}{120}\\[5mm]
\frac{24775}{2304} & \frac{9125}{768} & \frac{9271}{768} & \frac{693}{64} & \frac{547}{72}\\[5mm]
\frac{5571}{512} & \frac{24337}{2048} & \frac{12305}{1024} & \frac{1387}{128} & \frac{251}{32}
\end{pmatrix},
\]\\

\[
M_2^{(12)}=
\begin{pmatrix}
\frac{57}{8} & \frac{255}{32} & \frac{63}{8} & \frac{51}{8} & 3\\[5mm]
\frac{371}{48} & \frac{311}{36} & \frac{613}{72} & \frac{491}{72} & 3\\[5mm]
\frac{11983}{1440} & \frac{1189}{128} & \frac{79229}{8640} & \frac{591}{80} & \frac{133}{40}\\[5mm]
\frac{17083}{1920} & \frac{3803}{384} & \frac{22627}{2304} & \frac{15473}{1920} & \frac{159}{40}\\[5mm]
\frac{13577}{1440} & \frac{20047}{1920} & \frac{359651}{34560} & \frac{67217}{7680} & \frac{13957}{2880}\\[5mm]
\frac{11323}{1152} & \frac{16597}{1536} & \frac{49813}{4608} & \frac{4769}{512} & \frac{3301}{576}\\[5mm]
\frac{635}{64} & \frac{11079}{1024} & \frac{5553}{512} & \frac{19527}{2048} & \frac{815}{128}
\end{pmatrix},
\]\\

\[
M_3^{(12)}=
\begin{pmatrix}
\frac{27}{4} & \frac{117}{16} & \frac{27}{4} & \frac92 & 0\\[5mm]
\frac{29}{4} & \frac{1511}{192} & \frac{1049}{144} & \frac{39}{8} & 0\\[5mm]
\frac{743}{96} & \frac{8099}{960} & \frac{1423}{180} & \frac{1319}{240} & \frac{31}{60}\\[5mm]
\frac{659}{80} & \frac{4617}{512} & \frac{33047}{3840} & \frac{813}{128} & \frac{31}{20}\\[5mm]
\frac{8393}{960} & \frac{2455}{256} & \frac{107491}{11520} & \frac{3761}{512} & \frac{2837}{960}\\[5mm]
\frac{147}{16} & \frac{161}{16} & \frac{45841}{4608} & \frac{12721}{1536} & \frac{853}{192}\\[5mm]
\frac{1205}{128} & \frac{10513}{1024} & \frac{10509}{1024} & \frac{18257}{2048} & \frac{2909}{512}
\end{pmatrix}.
\]\\
Therefore,
\[
R_2(p,x,t)\le \max_{i,j,k}\beta^{(12)}_{i,j,k}
\qquad \text{for all }(p,x,t)\in Q_{12},
\]
thus, we have  
\[
\max_{Q_{12}} R_2 \le \max_{i,j,k}\beta^{(12)}_{i,j,k}=\frac{1761}{128}\approx 13.7578125.
\]
For \(Q_{21}\), the coefficient matrices \(M_k^{(21)}=(\beta^{(21)}_{i,j,k})\) are
\[
M_0^{(21)}=
\begin{pmatrix}
\frac{265}{64} & \frac{3401}{512} & \frac{4653}{512} & \frac{22745}{2048} & \frac{1571}{128}\\[5mm]
\frac{499}{96} & \frac{1927}{256} & \frac{22507}{2304} & \frac{35585}{3072} & \frac{9667}{768}\\[5mm]
\frac{1477}{240} & \frac{329}{40} & \frac{11719}{1152} & \frac{14987}{1280} & \frac{1601}{128}\\[5mm]
\frac{269}{40} & \frac{169}{20} & \frac{3207}{320} & \frac{3589}{320} & \frac{3771}{320}\\[5mm]
\frac{389}{60} & \frac{1243}{160} & \frac{1067}{120} & \frac{9313}{960} & \frac{803}{80}\\[5mm]
\frac{29}{6} & \frac{89}{16} & \frac{37}{6} & \frac{631}{96} & \frac{161}{24}\\[5mm]
1 & 1 & 1 & 1 & 1
\end{pmatrix},
\]\\
\[
M_1^{(21)}=
\begin{pmatrix}
\frac{433}{128} & \frac{731}{128} & \frac{8199}{1024} & \frac{20231}{2048} & \frac{5571}{512}\\[5mm]
\frac{271}{64} & \frac{7343}{1152} & \frac{38965}{4608} & \frac{93379}{9216} & \frac{6341}{576}\\[5mm]
\frac{799}{160} & \frac{19781}{2880} & \frac{99703}{11520} & \frac{7247}{720} & \frac{7759}{720}\\[5mm]
\frac{217}{40} & \frac{891}{128} & \frac{1609}{192} & \frac{72703}{7680} & \frac{3833}{384}\\[5mm]
\frac{207}{40} & \frac{907}{144} & \frac{10501}{1440} & \frac{46177}{5760} & \frac{11987}{1440}\\[5mm]
\frac{15}{4} & \frac{35}{8} & \frac{235}{48} & \frac{3023}{576} & \frac{773}{144}\\[5mm]
\frac12 & \frac12 & \frac12 & \frac12 & \frac12
\end{pmatrix},
\]\\
\[
M_2^{(21)}=
\begin{pmatrix}
\frac{747}{256} & \frac{10409}{2048} & \frac{3715}{512} & \frac{9241}{1024} & \frac{635}{64}\\[5mm]
\frac{1415}{384} & \frac{52079}{9216} & \frac{35053}{4608} & \frac{5297}{576} & \frac{11537}{1152}\\[5mm]
\frac{4217}{960} & \frac{70213}{11520} & \frac{268199}{34560} & \frac{6991}{768} & \frac{49}{5}\\[5mm]
\frac{387}{80} & \frac{3979}{640} & \frac{86741}{11520} & \frac{5483}{640} & \frac{17473}{1920}\\[5mm]
\frac{1129}{240} & \frac{32867}{5760} & \frac{57173}{8640} & \frac{2807}{384} & \frac{3671}{480}\\[5mm]
\frac{85}{24} & \frac{787}{192} & \frac{439}{96} & \frac{2827}{576} & \frac{727}{144}\\[5mm]
\frac34 & \frac34 & \frac34 & \frac34 & \frac34
\end{pmatrix},
\]\\

\[
M_3^{(21)}=
\begin{pmatrix}
\frac{1415}{512} & \frac{9779}{2048} & \frac{7017}{1024} & \frac{8767}{1024} & \frac{1205}{128}\\[5mm]
\frac{2753}{768} & \frac{16567}{3072} & \frac{1045}{144} & \frac{4511}{512} & \frac{617}{64}\\[5mm]
\frac{8489}{1920} & \frac{22943}{3840} & \frac{2903}{384} & \frac{1067}{120} & \frac{9263}{960}\\[5mm]
\frac{1633}{320} & \frac{407}{64} & \frac{5845}{768} & \frac{4433}{512} & \frac{297}{32}\\[5mm]
\frac{2569}{480} & \frac{12043}{1920} & \frac{4117}{576} & \frac{7547}{960} & \frac{3979}{480}\\[5mm]
\frac{229}{48} & \frac{339}{64} & \frac{553}{96} & \frac{587}{96} & \frac{101}{16}\\[5mm]
\frac{23}{8} & \frac{23}{8} & \frac{23}{8} & \frac{23}{8} & \frac{23}{8}
\end{pmatrix}.
\]\\
Therefore
\[
R_2(p,x,t)\le \max_{i,j,k}\beta^{(21)}_{i,j,k}
\qquad \text{for all }(p,x,t)\in Q_{21}.
\]
Thus, we have  
\[
\max_{Q_{21}} R_2 \le \frac{9667}{768}\approx 12.5872395833.
\]
For \(Q_{22}\), the coefficient matrices \(M_k^{(22)}=(\beta^{(22)}_{i,j,k})\) are

\[
M_0^{(22)}=
\begin{pmatrix}
\frac{1571}{128} & \frac{27527}{2048} & \frac{1761}{128} & \frac{3277}{256} & \frac{323}{32}\\[5mm]
\frac{9667}{768} & \frac{13917}{1024} & \frac{7939}{576} & \frac{4907}{384} & \frac{487}{48}\\[5mm]
\frac{1601}{128} & \frac{17033}{1280} & \frac{77009}{5760} & \frac{2969}{240} & \frac{199}{20}\\[5mm]
\frac{3771}{320} & \frac{3953}{320} & \frac{787}{64} & \frac{1819}{160} & \frac{93}{10}\\[5mm]
\frac{803}{80} & \frac{9959}{960} & \frac{819}{80} & \frac{303}{32} & \frac{79}{10}\\[5mm]
\frac{161}{24} & \frac{219}{32} & \frac{161}{24} & \frac{299}{48} & \frac{16}{3}\\[5mm]
1 & 1 & 1 & 1 & 1
\end{pmatrix},
\]\\
\[
M_1^{(22)}=
\begin{pmatrix}
\frac{5571}{512} & \frac{24337}{2048} & \frac{12305}{1024} & \frac{1387}{128} & \frac{251}{32}\\[5mm]
\frac{6341}{576} & \frac{36511}{3072} & \frac{18373}{1536} & \frac{347}{32} & \frac{1165}{144}\\[5mm]
\frac{7759}{720} & \frac{919}{80} & \frac{14719}{1280} & \frac{20137}{1920} & \frac{1457}{180}\\[5mm]
\frac{3833}{384} & \frac{80617}{7680} & \frac{20047}{1920} & \frac{9193}{960} & \frac{917}{120}\\[5mm]
\frac{11987}{1440} & \frac{16573}{1920} & \frac{767}{90} & \frac{3767}{480} & \frac{2323}{360}\\[5mm]
\frac{773}{144} & \frac{3161}{576} & \frac{43}{8} & \frac{715}{144} & \frac{151}{36}\\[5mm]
\frac12 & \frac12 & \frac12 & \frac12 & \frac12
\end{pmatrix},
\]\\
\[
M_2^{(22)}=
\begin{pmatrix}
\frac{635}{64} & \frac{11079}{1024} & \frac{5553}{512} & \frac{19527}{2048} & \frac{815}{128}\\[5mm]
\frac{11537}{1152} & \frac{65}{6} & \frac{50141}{4608} & \frac{9989}{1024} & \frac{2017}{288}\\[5mm]
\frac{49}{5} & \frac{40309}{3840} & \frac{364571}{34560} & \frac{36991}{3840} & \frac{21287}{2880}\\[5mm]
\frac{17473}{1920} & \frac{18497}{1920} & \frac{111317}{11520} & \frac{215}{24} & \frac{1163}{160}\\[5mm]
\frac{3671}{480} & \frac{5111}{640} & \frac{13771}{1728} & \frac{14327}{1920} & \frac{455}{72}\\[5mm]
\frac{727}{144} & \frac{2989}{576} & \frac{493}{96} & \frac{2783}{576} & \frac{38}{9}\\[5mm]
\frac34 & \frac34 & \frac34 & \frac34 & \frac34
\end{pmatrix},
\]
\\[2mm]
\[
M_3^{(22)}=
\begin{pmatrix}
\frac{1205}{128} & \frac{10513}{1024} & \frac{10509}{1024} & \frac{18257}{2048} & \frac{2909}{512}\\[5mm]
\frac{617}{64} & \frac{5361}{512} & \frac{12185}{1152} & \frac{29329}{3072} & \frac{5315}{768}\\[5mm]
\frac{9263}{960} & \frac{333}{32} & \frac{6777}{640} & \frac{7585}{768} & \frac{5063}{640}\\[5mm]
\frac{297}{32} & \frac{5071}{512} & \frac{7759}{768} & \frac{309}{32} & \frac{2663}{320}\\[5mm]
\frac{3979}{480} & \frac{8369}{960} & \frac{25517}{2880} & \frac{5499}{640} & \frac{249}{32}\\[5mm]
\frac{101}{16} & \frac{625}{96} & \frac{629}{96} & \frac{409}{64} & \frac{287}{48}\\[5mm]
\frac{23}{8} & \frac{23}{8} & \frac{23}{8} & \frac{23}{8} & \frac{23}{8}
\end{pmatrix}.
\]
Therefore
\[
\max_{Q_{22}} R_2(p, x, t) \le \max_{i,j,k}\beta^{(22)}_{i,j,k}=\frac{7939}{576}\approx 13.7829861111.
\]
Thus the Bernstein upper bounds on the four subregions are
\begin{align*}
Q_{11}:\ \frac{1571}{128},\qquad
Q_{12}:\ \frac{1761}{128},\qquad
Q_{21}:\ \frac{9667}{768},\qquad
Q_{22}:\ \frac{7939}{576}.
\end{align*}
Taking the largest of these four numbers, we conclude that
\[
R_2(p_1,x,t)\le \frac{7939}{576}
\qquad
\text{for all }
0\le p_1\le 1,\ 0\le x\le 1,\ 0\le t\le 1/2,
\]
precisely, 
$
{7939}/{576}\approx 13.7829861111.
$
Exhausting both the cases $y=0$ and $y=1$ for $H_1(p_1, x, y, t),$ we obtain
\[
144\, |H_{3}(1)|\le 16
\]
Hence, 
\begin{align*}
|H_{3}(1)|\le \frac{1}{9}.
\end{align*}
Now we prove that we obtain the extremal function for $H_{3}(1)$ by taking $w(z)=z^3,$ thus, we have 
\[
\varphi(z^3)=1+z^3+\frac{m}{n}z^6.
\]
Hence
\[
\frac{z f'(z)}{f(z)}=\varphi(z^3)=1+z^3+\frac{m}{n}z^6,
\]
so
\[
\frac{f'(z)}{f(z)}=\frac1z+z^2+\frac{m}{n}z^5.
\]
Integrating, we obtain
\[
\log f(z)=\log z+\frac{z^3}{3}+\frac{m}{6n}z^6+C.
\]
Therefore
\[
f(z)=C\,z\exp\left(\frac{z^3}{3}+\frac{m}{6n}z^6\right).
\]
Since \(f(0)=0\) and \(f'(0)=1\), we must have \(C=1\). Thus
\[
f(z)=z\exp\left(\frac{z^3}{3}+\frac{m}{6n}z^6\right).
\]
Set,
$
\alpha={m}/{6n}.
$
Using the exponential series,
\[
e^{\frac{z^3}{3}+\alpha z^6}
=1+\left(\frac{z^3}{3}+\alpha z^6\right)
+\frac12\left(\frac{z^3}{3}+\alpha z^6\right)^2
+\frac16\left(\frac{z^3}{3}\right)^3+\cdots.
\]
Now
\[
\frac12\left(\frac{z^3}{3}+\alpha z^6\right)^2
=\frac{z^6}{18}+\frac{\alpha}{3}z^9+\cdots,
\]
and
\[
\frac16\left(\frac{z^3}{3}\right)^3=\frac{z^9}{162}.
\]
Therefore
\[
f(z)=z+\frac{1}{3}z^4+\left(\alpha+\frac{1}{18}\right)z^7
+\left(\frac{\alpha}{3}+\frac{1}{162}\right)z^{10}+\cdots.
\]
Substituting \(\alpha={m}/{6n}\), we get
\[
f(z)=z+\frac{1}{3}z^4+\frac{n+3m}{18n}z^7+\frac{n+9m}{162n}z^{10}+\cdots.
\]
Thus, we have
\[
a_2=0,\qquad a_3=0,\qquad a_4=\frac13,\qquad a_5=0.
\]
Hence, 
\begin{align*}
|H_3(1)|=\frac19,
\end{align*}
which implies that $w(z)=z^3$ provides the extremal function, which proves the theorem. 
\end{pf}
\vspace{1mm}

\noindent\textbf{Compliance of Ethical Standards:}\\

\noindent\textbf{Conflict of interest.} The authors declare that there is no conflict  of interest regarding the publication of this paper.
\vspace{1.5mm}

\noindent\textbf{Data availability statement.}  Data sharing is not applicable to this article as no datasets were generated or analyzed during the current study.
\vspace{1.5mm}

\noindent\textbf{Authors contributions.} Both the authors have made equal contributions in reading, writing, and preparing the manuscript.\\

\noindent\textbf{Acknowledgment:} 
The second named author acknowledges financial support from the Council of Scientific and Industrial Research (CSIR), Government of India, through a CSIR Fellowship.

\end{document}